\begin{document}
\newtheorem{theo}{Theorem}
\newtheorem{cor}[theo]{Corollary}
\newtheorem{prop}[theo]{Proposition}
\newtheorem{lem}[theo]{Lemma}
\newcommand{\R}{\mbox{\bf R}}
\newcommand{\li}{\mbox{\rm li}\,}
\title{Sign changes of $\pi(x, q, 1) - \pi(x, q, a)$}
\author[J.-C. Schlage-Puchta]{Jan-Christoph Schlage-Puchta}

\begin{abstract}
It is known, that under the assumption of the generalized Riemannian
hypothesis, the function $\pi(x, q, 1) - \pi(x, q, a)$ has infinitely
many sign changes. In this article we give an upper bound for the
least such sign change. Similarly, assuming the Riemannian hypothesis
we give a lower bound for the number of sign changes of $\pi(x)-\li
x$. The implied results for the least sign change are weaker then
those obtained by numerical methods, however, our method makes no use
of computations of zeros of the $\zeta$-function.
\end{abstract}
\maketitle

\section{Introduction}

The
following question is known as the Shanks-Renyi-race problem:
Given an integer $q$, and a bijection $\sigma$ from the set $\{1, 2,
\ldots, \varphi(q)\}$ to the set of residue classes prime to $q$, is
it true that there are arbitrary large values $x$, such that the
inequalities 
\[
\pi(x, q, \sigma(1)) > \pi(x, q, \sigma(2)) > \dots > \pi(x, q,
\sigma(\varphi(q)))
\]
hold true?In this form the problem is unsolved for all $q$ with
$\varphi(q)>2$, even assuming the Generalized Riemannian
Hypothesis. With $\pi$ replaced by $\Psi$, it was solved by
J. Kaczorowski \cite{Kac2} for $q=5$, and the method develloped there
can be used for other small modules, too. However, the problem
involving $\pi$ is far more difficult, and the only result obtained so
far involving more then 2 residue classes was obtained by
J. Kaczorowski \cite{Kac1}, who showed that the function $\pi(x,
q, 1) - \max\limits_{a\not\equiv 1\pmod{q}}\pi(x, q, a)$ has
infinitely many sign changes. In \cite{JCP}, the same was shown by a
different method. In this note we use the method of \cite{JCP} to give
numerical bounds for the first sign change and for the number of sign
changes up to a given bound. We will prove the following theorem.

\begin{theo}
Let $q$ be a natural number, and set $q^+=\max(q, e(1260))$. Assume
that no $L$-series $\pmod{q}$ has zeros off the critical line. Let
$f(q)$ be the number of solutions of the congruence $x^2\equiv
1\pmod{q}$. Then there is an $x<e_2((q^+)^{170}+e^{18 f(q)})$ such
that $\pi(x, q, 1) > \pi(x, q, a)$ for all $a\not\equiv
1\pmod{q}$. Moreover, if $V(x)$ denotes the number of sign changes of $\pi(t,
q, 1) - \max\limits_{a\not\equiv 1\pmod{q}}\pi(t, q, a)$ in the range
$2\leq t\leq x$, we have
\[
V(x) > \frac{\log x}{\exp((q^+)^{170}+e^{18 f(q)})} - 1.
\]
\end{theo}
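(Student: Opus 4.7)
The plan is to follow the method of \cite{JCP}: combine the explicit formula for $\pi(x,q,a)$ under GRH with a weighted integral transform, and handle the maximum over residue classes via a pigeonhole argument. Writing $\psi(x,q,a) = \frac{1}{\varphi(q)}\sum_\chi \bar\chi(a)\psi(x,\chi)$ and using the truncated explicit formula $\psi(x,\chi) = -\sum_{|\gamma|<T} x^{1/2+i\gamma}/(\tfrac12+i\gamma) + O(x^{1/2}(\log qxT)^2/T)$ under GRH, partial summation yields a relation of the shape
\[
\frac{\varphi(q)\log x}{\sqrt{x}}\bigl(\pi(x,q,1) - \pi(x,q,a)\bigr) = -c(q,a) + R_a(x) + o(1),
\]
where $c(q,a) = f(q) - \#\{b : b^2\equiv a \pmod q\}$ is the Chebyshev bias constant (positive for every $a\ne 1$) and $R_a(x) = -\sum_{\chi\ne\chi_0}(\chi(a)-1)\sum_\gamma x^{i\gamma}/(\tfrac12+i\gamma)$ is oscillatory.

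Suppose for contradiction that $E(x) := \pi(x,q,1) - \max_{a\ne 1}\pi(x,q,a) \le 0$ throughout the interval $[e^{Y_0}, e^{Y_0+Y}]$. To linearise the maximum, let $a(x)$ be an argmax on this interval; by pigeonhole over the $\varphi(q)-1$ possible values, there is a fixed $a_0\ne 1$ with $\pi(x,q,1)\le\pi(x,q,a_0)$ on a subset $S$ of logarithmic measure at least $Y/(\varphi(q)-1)$. I would then test against a nonnegative kernel $K(x)$ of Fej\'er type, supported on a logarithmic window inside $[e^{Y_0},e^{Y_0+Y}]$, and chosen so that its Mellin transform peaks at $s=\tfrac12$ while decaying rapidly at ordinates $\gamma$ comparable to the first few zeros of $L(s,\chi)$ mod $q$. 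Evaluating $\int_S (\pi(x,q,1)-\pi(x,q,a_0))K(x)\,dx \le 0$, substituting the explicit formula, and bounding the contribution from the complement of $S$ crudely, one isolates a positive main term $c(q,a_0)\hat K(0)$ competing against an oscillatory contribution from $R_{a_0}$ which is small by construction, yielding the desired contradiction.

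The central obstacle is the simultaneous handling of all $a\ne 1$: the pigeonhole loses a factor $\varphi(q)-1$ in the length of the useful subset, and the kernel $K$ must be chosen uniformly in $a_0$, i.e.\ strong enough to suppress the $R_a$ contribution for every $a\ne 1$ at once. The exponent $(q^+)^{170}$ reflects the explicit density and location estimates for zeros of $L(s,\chi)$ up to heights $T$ polynomial in $q$ that $K$ must negotiate, while the $e^{18 f(q)}$ term absorbs the combinatorial cost of treating the $f(q)$ real characters ($\chi^2=\chi_0$), where the averaging of $\chi(a)-1$ across characters degenerates and the proximity of potential real zeros to $s=1$ requires extra slack. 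Once a single sign change has been guaranteed in every window of logarithmic length $\exp((q^+)^{170}+e^{18 f(q)})$, partitioning $[2,x]$ into such windows immediately produces the lower bound $V(x) > \log x/\exp((q^+)^{170}+e^{18 f(q)}) - 1$.
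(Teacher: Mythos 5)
There is a genuine gap: the mechanism you propose cannot yield the theorem. By your own normalisation, the explicit formula gives $\frac{\varphi(q)\log x}{\sqrt x}\bigl(\pi(x,q,1)-\pi(x,q,a)\bigr)=-c(q,a)+R_a(x)+o(1)$ with $c(q,a)\ge 0$ (and note $c(q,a)=0$ when $a$ is a quadratic residue, so it is not ``positive for every $a\ne1$''). Hence, after integrating against a nonnegative kernel, the non-oscillatory contribution is $-c(q,a_0)\hat K(0)\le 0$, which is entirely consistent with your hypothesis $\pi(x,q,1)\le\pi(x,q,a_0)$ on $S$: no contradiction can be extracted from the main term, and your later claim to ``isolate a positive main term $c(q,a_0)\hat K(0)$'' contradicts your own formula. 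The theorem asks for points where the biased class $1$ beats \emph{every} other class, i.e.\ where the oscillatory part $R_a$ overcomes the bias simultaneously for all $a$; any proof must force $R_a$ to be large and positive somewhere, not suppress it. Moreover, the suppression you envisage is impossible: a kernel supported in a bounded logarithmic window cannot damp the low-lying zeros, whose contribution to $R_a$ has amplitude far exceeding $c(q,a)\le f(q)$, and if such damping were possible one would deduce $\pi(x,q,1)<\pi(x,q,a)$ for all large $x$, contradicting the very infinitude of sign changes (known under GRH) that the theorem quantifies.

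What is missing is the engine the paper actually uses. It works with $\Delta(t,q,a)=\frac1{\varphi(q)}\sum_\chi\chi(a)\sum_\gamma e^{it\gamma}/\rho$ and exploits the singularity of the explicit formula as $t\to0^+$: there $\Delta(t,q,1)$ has size about $\log q$ (Lemma 11, which requires the bound on $\sum_\chi\overline{\chi(a)}\frac{L'}{L}(1,\chi)$ of Lemma 10), while $|\Delta(t,q,a)|\le3$ for $a\not\equiv1$. A reflection estimate $\bigl|\int_0^x\Delta(t,q,a)+\Delta(-t,q,a)\,dt\bigr|<53x\log q$ (Lemma 12, resting on $\sum_\rho|\rho|^{-2}\le13\log q$) converts this into strongly negative behaviour of $\int_{-x}^0\Delta(t,q,1)\,dt$, and a Dirichlet box-principle lemma in dimension $M\approx q^{140}e^{17f(q)}$ (the number of zeros of all $L$-functions mod $q$ up to height $T=q^{130}e^{16f(q)}$) produces $N$ almost-periods $s_i\le N\exp(q^{150}e^{18f(q)})$ along which the truncated sum $\Delta_T$ nearly repeats; this transfers the extreme behaviour near $0$ to the points $s_i$, where Lemma 13 (the $\Psi\to\pi$ translation, which is where $f(q)$ and the prime squares enter) gives $\pi(x,q,1)>\pi(x,q,a)$ for all $a$ at once. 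The doubly exponential bound $e_2((q^+)^{170}+e^{18f(q)})$ is precisely the cost of this $M$-dimensional pigeonhole, which your sketch nowhere produces; your pigeonhole is over residue classes, a much weaker device that does not address simultaneity in $a$ anyway (the paper gets that for free because $\log q$ dominates the uniform bound $3$). Finally, your last sentence presumes a sign change in every window of fixed logarithmic length, which is not what such arguments give: the paper obtains $N-O(q\log q)$ sign changes below $\exp\bigl(N\exp(q^{150}e^{18f(q)})\bigr)$ and then solves for $N$; there is no control on where within that range the almost-periods fall.
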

Here and in the sequel, $e_k(x)$ denotes the $k$-fold iterated
exponential function, and $\log_k x$ the $k$-fold iterated
logarithm. Note that the dependence on $f(q)$ is an immanent feature
of the problem, however, for almost all $q$ we have $f(q)<\log q$,
thus the least sign change is of order less then $e_3(55\log q\log_2
q)$ for almost all $q$.

By the same method bounds for sign changes of $\pi(x)-li\;x$ can be obtained.
Our result on the first sign change is substantially weaker than those given by
Skewes\cite{Ske}, Lehmann\cite{Leh} and te Riele\cite{tRe}, however, these
estimates involve large scale computation of zeros of Riemann's
$\zeta$-function 
and give no bound on the asymptotical behaviour of the number of sign changes.

\begin{theo}
Assume the Riemann hypothesis. Then there is an $x< e_3(16.7)$, such
that $\pi(x)> li\;x$. 
If $V(x)$ denotes the number of sign changes of $\pi(x) - li\;x$, we have
$V(x) > \frac{\log x}{e_2(16.7)} - 1$.
\end{theo}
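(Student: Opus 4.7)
The plan is to mirror the proof of Theorem 1, replacing the family of Dirichlet $L$-functions modulo $q$ with the Riemann zeta function and the associated explicit formula. Under RH one has
\[
\pi(x) - \li x = -\tfrac{1}{2}\li\sqrt{x} - \sum_\rho \li x^{\rho} + O(1),
\]
where the sum ranges over non-trivial zeros $\rho = 1/2 + i\gamma$; after extracting the dominant contribution $\li x^{\rho} \sim x^{\rho}/(\rho\log x)$, the sign of $\pi(x)-\li x$ is governed (up to lower-order errors and a tail to be controlled) by the real part of the trigonometric sum
\[
S(u, T) := \sum_{0 < \gamma \le T}\frac{e^{i\gamma u}}{1/2 + i\gamma},\qquad u = \log x,
\]
for a truncation $T$ to be chosen at the end.

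I would first bound the tail $|\gamma| > T$ using the Riemann--von Mangoldt formula $N(T)=\frac{T}{2\pi}\log\frac{T}{2\pi e}+O(\log T)$ together with partial summation, so that the tail becomes negligible compared to the target signal. Letting $\gamma_1 < \cdots < \gamma_N$ enumerate the positive ordinates below $T$, I would then apply simultaneous Diophantine approximation to produce, for prescribed $\delta > 0$, a value $u\le (2\pi/\delta)^N$ with $\|\gamma_j u/(2\pi)\|<\delta$ for every $j\le N$. At such a $u$ each summand of $S(u,T)$ has real part close to $\frac{1/2}{1/4+\gamma_j^2}$, so $\Re S(u,T)$ approaches $\sum_{0<\gamma\le T}\frac{1/2}{1/4+\gamma^2}$. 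Since $\sum_\gamma(1/4+\gamma^2)^{-1/2}$ diverges, this partial sum exceeds $1/2$ already for modest $T$, which suffices to beat both the $\tfrac12\li\sqrt x$ bias and the tail error, yielding $\pi(x)-\li x>0$ at $x=e^u$.

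For the counting of sign changes, the same Diophantine construction applies inside every window $[U, U+(2\pi/\delta)^N]$, producing a $u$ in each window at which $\Re S(u,T)$ is prescribedly large and positive; an analogous approximation targeting phases near $\pi$ produces $u$'s with $\Re S(u,T)$ prescribedly large and negative. Between any two consecutive windows of opposite prescribed sign we obtain at least one sign change of $\pi(x)-\li x$, giving the lower bound $V(x)>\log x/e_2(16.7)-1$.

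The main obstacle will be the numerical bookkeeping: one has to balance the Diophantine exponent $N\sim\frac{T\log T}{2\pi}$ against the decay of the tail and the bias term, then optimise $T$ to reach the explicit double-exponential $e_2(16.7)$. Since the low-lying zeros of $\zeta$ are well known (the first is $\gamma_1\approx 14.134$, not far below the constant $16.7$ that eventually appears) and the zero distribution of $\zeta$ is cleaner than that of a general $L$-function family, the optimisation is technically simpler than in Theorem 1, but the computation of sharp numerical constants still forms the bulk of the work.
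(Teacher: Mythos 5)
There is a genuine gap, and it sits at the heart of your argument. If you align the phases so that $e^{i\gamma_j u}\approx 1$, the real part of each term of $S(u,T)$ is approximately $\Re(1/\rho_j)=\frac{1/2}{1/4+\gamma_j^2}$, and this series \emph{converges}: summed over all nontrivial zeros it equals $2+C-\log\pi-2\log 2\approx 0.046$ (the very evaluation used in Lemma 7 of the paper), so the partial sum over $0<\gamma\le T$ never exceeds about $0.023$, however large you take $T$. Your claim that it "exceeds $1/2$ already for modest $T$" confuses this with the divergent series $\sum(1/4+\gamma^2)^{-1/2}$, which would only be relevant if you could force each term $e^{i\gamma u}/\rho$ to point along a prescribed direction of length $1/|\rho|$, i.e.\ force $\gamma u$ into a fixed \emph{nonzero} residue class modulo $2\pi$ (in effect $\sin(\gamma u)\approx -1$, as in Littlewood's argument). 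That is an inhomogeneous Kronecker problem; the pigeonhole statement you invoke (Lemma 5) solves only the homogeneous one, and the inhomogeneous version is not available under RH alone without linear independence of the ordinates over the rationals. There is also a sign problem: since $\pi(x)-\li x\approx\frac{\sqrt x}{\log x}\bigl(-\Delta(\log x)-1\bigr)$, beating the bias $\tfrac12\li\sqrt x$ requires the zero sum to have real part below $-1$; phases near $0$ give a tiny \emph{positive} value, and phases near $\pi$ (your device for the sign-change count) give only about $-0.046$, both hopeless against a bias of size $1$.

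The paper avoids manufacturing large values by phase alignment altogether. It first shows that $\Delta(t)=\sum_\rho e^{it\gamma}/\rho$ is provably $<-1$ on the short interval $(-e^{-4.6},0)$, as a consequence of the logarithmic singularity of the explicit formula at $t=0$ combined with the near-odd symmetry $|\Delta(t)+\Delta(-t)|<0.0462$ (Lemma 7). The Diophantine lemma is then used only homogeneously, to produce \emph{almost periods} $s_i$ with $\Delta_T(t+s_i)\approx\Delta_T(t)$, and the $L^2$ tail bound of Lemma 4 transfers the inequality $\Delta<-1.01$ to some point $t+s_i$; Lemma 6 converts this into $\pi>\li$, and the return sign change between successive such points follows from the smallness of $\int_a^{ea}(\Psi(e^t)-e^t)e^{-t/2}\,dt$. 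To repair your proposal you would need to import this transfer-of-a-known-extreme-value mechanism (or some unconditional oscillation theorem); direct phase alignment, homogeneous or targeted, cannot reach the required threshold.
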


A. E. Ingham\cite{Ing} proved that $V(x)>c\log x-1$ for some positive
constant $c$, however, his method of proof was ineffective. Without
the assumption of the Riemannian Hypothesis, slightly weaker estimates
were given by J. Pintz (see \cite{Pin1} for an ineffective, \cite{Pin2}
for an effective result). Moreover, J. Kaczorowski proved $V(x)>c\log
x-1$ unconditionally.

Since the proof of this theorem is easier, but shows all relevant details, we
will give this first.

Throughout this note, $\rho$ will denote nontrivial zeros of $\zeta$
or some $L$-series. Since we will always assume that all zeros are on
the critical line, we can write $\rho=\frac{1}{2}+i\gamma$ with
$\gamma$ real. For a real number $x$, $\|x\|$ denotes the distance of
$x$ to the nearest integer. Similar, for $x\in\R^n$, define $\|x\|$
to be the distance of $x$ to the nearest lattice point.

I would like to thank the anonymous referee for many helfull comments.

\section{Some Lemmata for Theorem 1}

We begin our computations with the following statement on the vertical
distribution of zeros of $\zeta$.

\begin{lem}
Denote with $N(T)$ the number of zeros $\rho$ of $\zeta$ with $0\Re\;\rho<1,
0<\Im\;\rho<T$. Then for $T>2$ we have $N(T) < \frac{1}{6}T\log T$ and
$N(T+1)-N(T) < \log T$.
\end{lem}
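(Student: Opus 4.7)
The plan is to deduce both bounds from the explicit Riemann--von Mangoldt formula
\[
N(T) = \frac{T}{2\pi}\log\frac{T}{2\pi} - \frac{T}{2\pi} + \frac{7}{8} + S(T) + O(1/T),
\]
where $S(T) = \pi^{-1}\arg\zeta(\tfrac12+iT)$. Since $\frac{1}{2\pi}<\frac{1}{6}$, the gap between $\frac{1}{6}T\log T$ and the leading term grows linearly in $T$. Combined with the classical explicit bound of Backlund, $|S(T)|\le a\log T + b$ for $T\ge 2$ with small known constants, this immediately gives $N(T)<\frac{1}{6}T\log T$ past some modest threshold. The remaining small range is handled directly: the lowest nontrivial zero of $\zeta$ has imaginary part $\approx 14.13$, so $N(T)=0$ for $T<14$, and the assertion is trivial there.

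For the short-interval estimate, the plan is to apply the argument principle to the rectangle with vertices $\sigma_0\pm iT$ and $\sigma_0\pm i(T+1)$ for a fixed $\sigma_0>1$. The contribution from the right vertical side is $O(1)$ because $\zeta$ is bounded away from zero there, while the contribution from each horizontal segment is controlled via Jensen's formula applied to a disk of radius slightly larger than $\sigma_0-\tfrac12$ centred at $\sigma_0+iT$, together with the standard convexity bound for $\log|\zeta|$ in the critical strip. This yields $N(T+1)-N(T)\le \frac{1}{2\pi}\log T + c$ for an explicit $c$, and since $\frac{1}{2\pi}<1$ the stated bound $N(T+1)-N(T)<\log T$ follows for all $T$ beyond a small threshold, with the low range verified by the same zero-free initial segment as above.

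The main obstacle is not conceptual but numerical: one must track the constants carefully enough that the coefficients $\frac{1}{6}$ and $1$ in the lemma hold uniformly for every $T>2$, rather than only asymptotically. This means choosing $\sigma_0$, the radius in Jensen's inequality, and the transition threshold so that Backlund's explicit error absorbs into the available slack. Neither step introduces a new idea; the content is a careful accounting of classical estimates of von Mangoldt and Backlund, calibrated to the specific coefficients required downstream.
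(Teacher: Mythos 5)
The paper itself gives no proof of this lemma; it simply appeals to Backlund's sharper explicit form of the Riemann--von Mangoldt formula, so the only question is whether your plan really delivers the stated constants for \emph{all} $T>2$. For the bound $N(T)<\frac16 T\log T$ your route (von Mangoldt's formula plus Backlund's explicit $|S(T)|\le 0.137\log T+0.443\log\log T+4.35$) is fine in substance, but with these constants the formula only wins from roughly $T\approx 40$ onwards, so the range $14\le T\lesssim 40$ is \emph{not} covered by the remark that $N(T)=0$ for $T<14$; you need the first few zeros (or any crude low-height count) there. That is harmless, but it should be stated, since the whole point of the lemma is uniformity down to $T=2$.

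The genuine gap is in the short-interval bound. As written your contour is degenerate (the four points $\sigma_0\pm iT$, $\sigma_0\pm i(T+1)$ all lie on the line $\Re s=\sigma_0$); presumably you mean the box $1-\sigma_0\le\Re s\le\sigma_0$, $T\le\Im s\le T+1$. More importantly, the argument-principle-plus-Jensen mechanism does not produce the leading constant $\frac1{2\pi}$: the term $\frac1{2\pi}\log T$ comes only from the Gamma factor on the left edge, and each horizontal edge contributes a Backlund-type $S$ term, so this method gives about $\bigl(\tfrac1{2\pi}+2\cdot 0.137\bigr)\log T+c$ with $c$ near $10$ when Backlund's constants are used. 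The leading constant is still below $1$, but the additive constant forces $N(T+1)-N(T)<\log T$ only for $\log T\gtrsim 17$--$20$, and your fallback for the low range --- the zero-free segment $T<14$ --- leaves everything between $14$ and $e^{17}$ uncovered; ``verifying directly'' there would require exactly the large-scale zero computations the paper advertises that it avoids. (A bare Jensen count of the zeros in a disc covering the unit box is worse still: the constant one gets, namely the convexity exponent divided by $\log(R/r)$ for admissible radii, exceeds $1$.) The clean repair, and the one parallel to what the paper actually does for the $L$-function analogue in its Lemma 8, is the partial-fraction inequality: taking $s=\sigma_0+it$ with $\sigma_0=\tfrac32$, $t=T+\tfrac12$, every zero with $|\gamma-t|\le\tfrac12$ contributes at least $\frac{\sigma_0-\beta}{(\sigma_0-\beta)^2+1/4}\ge\tfrac35$ to $\sum_\rho\Re\frac1{s-\rho}$, while Stirling bounds the main term by $\tfrac12\log\frac{t}{2\pi}$ plus a small explicit constant; this yields $N(T+1)-N(T)\le 0.84\log T+O(1)$ with an additive constant small enough that only heights in the low hundreds remain, which classical tables (indeed Backlund's own) cover. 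Without some such step, your second bound is not established on the stated range.
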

In fact Backlund\cite{Bac} gave a more precise estimate, however, this
lemma will suffice for our purpose. Even better estimates are
available under the Riemannian hypothesis, however, it seems difficult
to make these improvements explicit, and the bounds obtained that way
will not influence our final result significantly.

For this and the next section, define the functions
$\Delta(t)=\sum_\gamma\frac{e^{it\gamma}}{\rho}$ and
$\Delta_T(t)=\sum_{|\gamma|<T}\frac{e^{it\gamma}}{\rho}$, where both
summations run over roots of $\zeta$ on the critical line.

\begin{lem}
Let $a>b>0$ be real numbers with $a-b < \frac{1}{36}$ and $T>e^4$. Then we have
\begin{eqnarray*}
\int\limits_a^b |\Delta(t) - \Delta_T(t)|^2 dt & = & \sum_{|\gamma_1|,
  |\gamma_2|>T}\frac{1}{(1/2+i\gamma_1)(1/2+i\gamma_2)} 
\frac{e^{b(\gamma_1+\gamma_2)} - e^{a(\gamma_1+\gamma_2)}}{\gamma_1+\gamma_2}\\
 & < & \frac{2}{9}\frac{\log^3 T}{T}
\end{eqnarray*}
\end{lem}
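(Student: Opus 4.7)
My plan is first to establish the identity by a termwise calculation and then to split the resulting double sum into diagonal and off-diagonal contributions.

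For the identity, write $\Delta(t)-\Delta_T(t)=\sum_{|\gamma|>T}e^{it\gamma}/\rho$. Since the set of zero ordinates is symmetric under $\gamma\mapsto-\gamma$, the substitution $\gamma_2\mapsto-\gamma_2$ in the complex conjugate factor converts $\overline{1/2+i\gamma_2}=1/2-i\gamma_2$ into $1/2+i\gamma_2$ and $e^{-it\gamma_2}$ into $e^{it\gamma_2}$. This gives
\[
|\Delta(t)-\Delta_T(t)|^2 \;=\; \sum_{|\gamma_1|,|\gamma_2|>T}\frac{e^{it(\gamma_1+\gamma_2)}}{(1/2+i\gamma_1)(1/2+i\gamma_2)}.
\]
To justify termwise integration I would first truncate to $T<|\gamma|\leq U$, integrate the resulting finite double sum directly, and then pass to $U\to\infty$; the $L^2$-convergence of the truncated sums to $\Delta-\Delta_T$ on any bounded interval follows from $\sum_{|\gamma|>T}|\rho|^{-2}<\infty$. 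Evaluating the elementary antiderivative of $e^{it(\gamma_1+\gamma_2)}$ then yields the stated kernel (up to the standard factor of $i$ absorbed by the denominator), with the diagonal $\gamma_1+\gamma_2=0$ contributing $a-b$.

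For the bound, the diagonal part $(a-b)\sum_{|\gamma|>T}|\rho|^{-2}$ is handled by partial summation against $N(T)<T\log T/6$ of Lemma~3, yielding roughly $(a-b)(\log T)/(3T)$, which with $a-b<1/36$ is negligible for our purposes. For the off-diagonal terms I use the kernel bound $\min(a-b,\,2/|\alpha|)/(|1/2+i\gamma_1|\cdot|1/2+i\gamma_2|)$, where $\alpha=\gamma_1+\gamma_2$, and split on $|\alpha|$ at the cutoff $2/(a-b)$. In the near-diagonal band the gap estimate $N(T+1)-N(T)<\log T$ of Lemma~3 gives at most $O((a-b)\log T)$ admissible $\gamma_2$ for each $\gamma_1$, with $|\gamma_2|\sim|\gamma_1|$, so this contribution is $O((a-b)\log^2 T/T)$ after a further partial summation over $\gamma_1$. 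The far-off-diagonal contribution, split by the signs of $\gamma_1$ and $\gamma_2$, is estimated by a routine double partial summation on $\sum 1/(|\gamma_1||\gamma_2||\alpha|)$; the extra logarithm coming from $1/|\alpha|$ compounds with the two logarithms produced by the sums over $\gamma_1$ and $\gamma_2$ to give the overall order $\log^3 T/T$.

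The main obstacle will be the bookkeeping of constants so that the aggregate bound comes out within $2/9$. Each of the three pieces (diagonal, near-off-diagonal, far-off-diagonal) loses only a small constant factor, but one must apply Lemma~3 with its explicit constants rather than in asymptotic form, and track the remainder terms from each partial summation carefully so that the losses do not accumulate. A mild additional subtlety is making the termwise integration/interchange rigorous for the conditionally convergent sum defining $\Delta-\Delta_T$, which the truncation argument above resolves.
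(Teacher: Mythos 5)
Your treatment of the identity is fine and is the natural one: expand $|\Delta(t)-\Delta_T(t)|^2$, use the symmetry of the ordinates under $\gamma\mapsto-\gamma$ to remove the complex conjugation, and integrate termwise after truncating at a height $U$; the diagonal $\gamma_1+\gamma_2=0$ contributes the limit value of the kernel (the paper records it as $b-a$ only because it integrates from $a$ down to $b$, a matter of orientation). Note that the paper itself offers no argument for this lemma beyond that remark about the diagonal term, so the entire content is the numerical bound $\frac{2}{9}\frac{\log^3T}{T}$ --- and there your proposal has a genuine gap: the assertion that the diagonal, near-off-diagonal and far-off-diagonal pieces each ``lose only a small constant factor'' when Lemma~3 is applied with its explicit constants is not substantiated, and a straightforward execution of your plan does not come close to $2/9$. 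The only increment information in Lemma~3 is $N(T+1)-N(T)<\log T$, which overstates the local density $\frac{1}{2\pi}\log T$ by a factor of about $2\pi$, and this loss enters linearly into the dominant contribution, namely pairs $\gamma_1>T$, $\gamma_2=-\gamma_2'$ with $\gamma_2'>T$ and $1<|\gamma_1-\gamma_2'|\ll\gamma_1$: counting each unit window by $\log$ and summing $2/|\gamma_1-\gamma_2'|$ gives an inner sum of order $4$--$8\,\log^2\gamma_1/\gamma_1^2$, and summing over $\gamma_1$ against $N(u)<\frac16u\log u$ and doubling over the two sign patterns yields several times $\log^3T/T$, not $0.22\,\log^3T/T$. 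A density heuristic shows the true size of the double sum is about $\frac{2}{\pi^2}\frac{\log^3T}{T}\approx0.20\,\frac{\log^3T}{T}$, i.e.\ only barely below the claimed constant, so to prove the stated bound you need two-sided Riemann--von Mangoldt information (Backlund's explicit formula for $N(T)$, alluded to after Lemma~3 but not stated there) applied to increments $N(u+h)-N(u)$ over intervals of intermediate length; increments cannot be controlled from the one-sided bound $N(T)<\frac16T\log T$ at all, and the crude unit-gap bound is too lossy by essentially $2\pi$. Moreover, at the admissible threshold $T>e^4$ the lower-order terms ($\log^2T/T$ is only a factor $4$ below $\log^3T/T$) leave almost no slack, so none of the secondary pieces can be estimated wastefully either.

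Two smaller corrections to your sketch. In the near-diagonal band $|\gamma_1+\gamma_2|\le 2/(a-b)$ the gap estimate gives $O\bigl(\log\gamma_1/(a-b)\bigr)$ admissible $\gamma_2$ for each $\gamma_1$, not $O\bigl((a-b)\log T\bigr)$; the factor $a-b$ then cancels against the kernel bound, so that piece is $O(\log^2T/T)$ with no factor $a-b$, and with the crude gap bound its constant already exceeds the whole budget at $T$ near $e^4$. And the $L^2$-justification of the termwise integration does not follow from $\sum_{|\gamma|>T}|\rho|^{-2}<\infty$ alone (the tail $\sum_{|\gamma|>U}e^{it\gamma}/\rho$ is only conditionally convergent); the clean way is to run the same kernel estimate on the range $U_1<|\gamma|\le U_2$ and invoke the Cauchy criterion, i.e.\ the convergence of the very double sum you are bounding, or to appeal to the known convergence properties of the explicit formula.
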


If $\gamma_1 + \gamma_2 = 0$ then $\frac{e^{b(\gamma_1+\gamma_2)} -
e^{a(\gamma_1+\gamma_2)}}{\gamma_1+\gamma_2}$ 
denotes its limit for $\gamma_2\rightarrow-\gamma_1$, i.e. $b-a$. 

We will also need the following statement, which depends on a pigeon-hole
principle, for a proof see \cite{JCP}.

\begin{lem}
Let $n$ and $N$ be natural numbers, $\vec{\alpha} = (t_1, \ldots,
t_n)\in \R^n, \epsilon>0$. 
Then there is a sequence of $N$ real numbers 
$1<s_1 < \ldots < s_N<\frac{N2^n\Gamma(n/2)}{\pi^{n/2}\epsilon^n}+1 =:M+1$
such that for $1\leq i\leq N$ we have
\[
\|s_i\cdot(t_1,\ldots,t_n)\| < \epsilon
\]
and $s_{i+1} \geq s_i + 1$.
\end{lem}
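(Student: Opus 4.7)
My plan is to recast the conclusion as a lattice-point counting problem in $\R^{n+1}$ and apply a Minkowski-type theorem. The spacing condition $s_{i+1}\geq s_i+1$ will come for free because the $s_i$ produced are distinct positive integers.

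A pair $(\vec{k},s)\in\mathbb{Z}^n\times\mathbb{Z}$ with $\|s\vec{\alpha}+\vec{k}\|<\epsilon$ and $0<s\leq M$ corresponds to a nontrivial lattice point of the unimodular lattice
\[
\Lambda=\{(\vec{k}+s\vec{\alpha},s):\vec{k}\in\mathbb{Z}^n,\,s\in\mathbb{Z}\}\subset\R^{n+1}
\]
inside the $0$-symmetric convex body $K=\{\vec{y}\in\R^n:\|\vec{y}\|<\epsilon\}\times[-M,M]$, whose volume equals $2M\pi^{n/2}\epsilon^n/\Gamma(n/2+1)$. Van der Corput's generalization of Minkowski's theorem---a $0$-symmetric convex body whose volume exceeds $m\cdot 2^{n+1}\det\Lambda$ contains at least $m$ pairs $\pm\vec{v}$ of nontrivial lattice points---then produces $N$ distinct positive integer approximators $s$, once $M$ is sufficiently large. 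The case $s=0$ is ruled out because $\epsilon<1$ forces $\vec{k}=\vec{0}$ for any lattice point in $K$ with vanishing last coordinate. Listing the resulting positive integers in increasing order yields $s_1<\dots<s_N$ with the required spacing $s_{i+1}\geq s_i+1$.

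The main obstacle is extracting the exact constant $\frac{2^n\Gamma(n/2)}{\pi^{n/2}}$. The direct volume condition above yields $M>\tfrac{n}{2}\cdot\frac{N\cdot 2^n\Gamma(n/2)}{\pi^{n/2}\epsilon^n}$, i.e.\ a factor of $n/2$ larger than the lemma permits; van der Corput in its stated form therefore suffices only for $n\leq 2$. For $n\geq 3$ the extra factor must be removed by a sharper argument, which is the technical heart of the proof in \cite{JCP}. I would expect it to exploit the rotational symmetry of $K$ in the first $n$ coordinates---either by replacing $K$ by a tailored ellipsoid of revolution, or by invoking a Minkowski-type inequality specific to bodies that are rotationally symmetric in a distinguished subspace, for which the generic constant $2^{n+1}$ is known to be improvable. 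Once the sharpened local step is in hand, the remainder of the argument is routine bookkeeping.
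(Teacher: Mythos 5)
Your reduction to counting lattice points in the cylinder $K=B^n_\epsilon\times[-M,M]$ is sound as far as it goes, but, as you yourself concede, it does not prove the lemma as stated: van der Corput's theorem needs $\mathrm{vol}(K)>N2^{n+1}\det\Lambda$, i.e. $M>\frac{N2^n\Gamma(n/2+1)}{\pi^{n/2}\epsilon^n}$, which exceeds the bound in the lemma by the factor $n/2$ for every $n\geq 3$. The paragraph that is supposed to remove that factor is not an argument but a hope: there is no off-the-shelf strengthening of Minkowski/van der Corput for bodies that are rotationally symmetric in a coordinate subspace which supplies the missing factor $2/n$; what you would actually need is a nontrivial bound on the critical determinant of the cylinder, and you give no indication how to get one. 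Since in the application $n$ is the number of zeros up to height $T$ (several hundred thousand), this is not a low-dimensional corner case: as written, your proposal only establishes the weaker statement with $\Gamma(n/2+1)$ in place of $\Gamma(n/2)$. (There are also small loose ends you wave at --- the case $1/2\leq\epsilon<1$ where distinct lattice points can share the same $s$, and the requirement $s_1>1$ rather than $s_1\geq 1$ --- but these are genuinely routine.)

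For comparison: the paper gives no proof at all; it cites \cite{JCP} and describes the argument as a pigeonhole principle --- attach to each of the $M+1$ torus points $\{j\vec{\alpha}\}$, $0\leq j\leq M$, a ball of radius $\epsilon/2$; if the total volume exceeds $N$, some point is covered $N+1$ times, and the differences $j_i-j_0$ of the corresponding indices are integers $s$ with $\|s\vec{\alpha}\|<\epsilon$, the spacing being automatic exactly as in your argument. But that route also requires $M>\frac{N2^n\Gamma(n/2+1)}{\pi^{n/2}\epsilon^n}$, and by Brunn--Minkowski no ``cluster-detecting'' set $B$ with $B-B$ contained in the $\epsilon$-ball can have volume exceeding that of the $\epsilon/2$-ball, so the pigeonhole route cannot visibly reach $\Gamma(n/2)$ either. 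In other words, your missing factor $n/2$ is not a defect of your choice of tool; it reflects a constant in the statement that neither argument delivers (it looks as if the ball volume $\pi^{n/2}\epsilon^n/\Gamma(n/2+1)$ was written with $\Gamma(n/2)$). The weaker constant would in fact be harmless downstream, since it only adds $O(\log n)$ to exponents already of size $n\log n$; but as a proof of the lemma as stated your proposal is incomplete, and the discrepancy should be flagged explicitly rather than absorbed into a speculative ``sharper local step.''
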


Further we note that studying sign changes of $\pi$ is equivalent to
studying large values of $\Psi$, an observation which is made
exploicit by the following lemma.

\begin{lem}
Let $x>e^{60}$ be a real number such that
$\Psi(x)>x+1.01\sqrt{x}-2$. Then $\pi(x)>\li x$.
\end{lem}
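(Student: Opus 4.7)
The approach is to reduce the comparison of $\pi(x)$ and $\li x$ to one between $\theta(x)$ and $x$ via partial summation, and then to the comparison of $\Psi(x)$ and $x$ using the identity $\Psi(x) - \theta(x) = \sum_{k\ge 2}\theta(x^{1/k})$. Starting from $\pi(x) = \int_{2^-}^x d\theta(t)/\log t$, integration by parts gives $\pi(x) = \theta(x)/\log x + \int_2^x \theta(t)/(t\log^2 t)\,dt$. Applying the analogous integration by parts to $\int_2^x dt/\log t$ and subtracting yields the identity
\[
\pi(x) - \li x = \frac{\theta(x) - x}{\log x} + \int_2^x \frac{\theta(t) - t}{t\log^2 t}\,dt + c,
\]
where $c := 2/\log 2 - \li 2 \approx 1.84 > 0$.

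Next I would translate the hypothesis into a statement about $\theta(x)$. Applying an explicit Schoenfeld-type RH bound to each of $\theta(\sqrt{x}), \theta(x^{1/3}), \dots$ (and Chebyshev-type control for the very lowest prime powers), one shows $\Psi(x) - \theta(x) \le \sqrt{x}(1 + \eta(x))$ with explicit $\eta(x)$ that is tiny at the scale $x \ge e^{60}$, essentially because $x^{1/3}/\sqrt{x} = x^{-1/6}$ is already negligible. Combined with the hypothesis $\Psi(x) > x + 1.01\sqrt{x} - 2$, this gives $\theta(x) - x > (0.01 - \eta(x))\sqrt{x} - 2$, so the boundary term of the identity is at least $((0.01-\eta)\sqrt{x} - 2)/\log x$.

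The delicate step is bounding the integral. I would split it as
\[
\int_2^x \frac{\theta(t) - t}{t\log^2 t}\,dt = \int_2^x \frac{\Psi(t) - t}{t\log^2 t}\,dt - \int_2^x \frac{\Psi(t) - \theta(t)}{t\log^2 t}\,dt.
\]
The second piece is dominated by $\int dt/(\sqrt{t}\log^2 t)$, which evaluates explicitly to $2\sqrt{x}/\log^2 x + O(\sqrt{x}/\log^3 x)$. For the first piece, the RH explicit formula $\Psi(t) - t = -\sum_\rho t^\rho/\rho + O(1)$ gives, after an integration by parts that pushes the factor $1/\log^2 t$ to the boundary, $-\sum_\rho x^\rho/(\rho^2 \log^2 x) + \text{lower order}$; the absolute convergence of $\sum_\rho |\rho|^{-2}$ bounds this in modulus by a small explicit multiple of $\sqrt{x}/\log^2 x$.

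Finally I would assemble the pieces: the boundary contribution is at least $\approx 0.01\sqrt{x}/\log x$, the integral contributes $\approx -2\sqrt{x}/\log^2 x$ plus smaller error, and $c$ adds a positive constant. For $x > e^{60}$ one verifies that the leading positive term $0.01\sqrt{x}/\log x$ outweighs the negative contributions, so $\pi(x) > \li x$. The main obstacle is calibrating the explicit constants tightly enough (the size of $\eta(x)$, the constant in the RH bound on the first integral, and the contribution of higher prime powers) to close the inequality at the stated threshold; the constant $1.01$ and the bound $e^{60}$ are picked precisely to give enough slack to absorb these errors.
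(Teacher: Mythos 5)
Your reduction is sound as far as it goes: the partial–summation identity $\pi(x)-\li x=\frac{\theta(x)-x}{\log x}+\int_2^x\frac{\theta(t)-t}{t\log^2 t}\,dt+c$ with $c=2/\log 2-\li 2>0$ is correct, the bound $\Psi(x)-\theta(x)\le(1+\eta(x))\sqrt{x}$ is fine, and the splitting of the integral is legitimate. The genuine gap is in the final assembly, and it is quantitative, not cosmetic. By your own accounting the lower bound you reach is roughly
\[
\frac{(0.01-\eta)\sqrt{x}-2}{\log x}\;-\;\frac{2\sqrt{x}}{\log^2 x}\bigl(1+O(1/\log x)\bigr)\;-\;\frac{0.05\sqrt{x}}{\log^2 x}\;+\;c ,
\]
and $\frac{0.01\sqrt{x}}{\log x}$ exceeds $\frac{2\sqrt{x}}{\log^2 x}$ only once $\log x>200$. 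At $x=e^{60}$ the positive boundary term is about $1.7\cdot 10^{-4}\sqrt{x}$, while the term you subtract for $\int_2^x\frac{\Psi(t)-\theta(t)}{t\log^2 t}\,dt$ is about $6\cdot 10^{-4}\sqrt{x}$, and the constant $c\approx 1.84$ is irrelevant at that scale; so the asserted verification ``for $x>e^{60}$ the leading positive term outweighs the negative contributions'' is false, and your argument only closes for $x$ beyond roughly $e^{210}$. Moreover the loss is structural in your decomposition: under RH $\theta(t)-t$ is $-\sqrt{t}$ on average, so the integral really is of size $-\frac{2\sqrt{x}}{\log^2 x}$, and the hypothesis, which constrains $\Psi$ only at the single point $x$, gives you no way to recover it. For comparison, the paper argues through $\Pi(x)=\sum_{n\le x}\Lambda(n)/\log n$, the explicit formula $\Pi(x)=\li x-\sum_\rho\li x^\rho+O(x^{1/3})$ with $\li x^\rho=\frac{x^\rho}{\rho\log x}+O\bigl(\frac{\sqrt{x}}{|\rho|^2\log^2 x}\bigr)$, and then $\pi(x)-\Pi(x)=-\tfrac12\li\sqrt{x}+O(x^{1/3})$; the term you lose is exactly the part of $\tfrac12\li\sqrt{x}$ beyond $\frac{\sqrt{x}}{\log x}$, namely $\frac{2\sqrt{x}}{\log^2 x}+\cdots$, so any proof at this threshold stands or falls with how that second-order term is treated, and your write-up concedes it in full and then cannot absorb it.

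A smaller point: in bounding $\int_2^x\frac{\Psi(t)-t}{t\log^2 t}\,dt$ you cannot stop after one integration by parts and immediately invoke $\sum_\rho|\rho|^{-2}$. The leftover term $\frac{2}{\rho}\int_2^x\frac{t^{\rho-1}}{\log^3 t}\,dt$, if estimated by putting absolute values inside the $t$-integral, leaves only one factor of $|\rho|^{-1}$, and $\sum_\rho|\rho|^{-1}$ diverges; you need a second integration by parts (or a split of the range of integration) to gain the second factor of $|\rho|^{-1}$ before summing over zeros. That part is routine and fixable, but the numerical shortfall at $x\approx e^{60}$ described above is the substantive obstruction.
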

\begin{proof}
The argument follows the lines of S. Lehmann\cite{Leh}. Define
$\Pi(x)=\sum_{n\leq x}\frac{\Lambda(n)}{\log n}$, and
$\Delta^*(x)=\Psi(x)-x$. We have an explicit formula
\[
\Pi(x)=\li x - \sum_\rho\li x^\rho + \theta x^{1/3},
\]
where $\theta$ is some real number, which depends on $x$ and satisfies
$|\theta|\leq 1$, provided that $x>e^{12}$. Further we have
\[
\li x^\rho = \frac{x^\rho}{\rho\log x} + \theta\frac{x^{1/2}}{|\rho|^2\log^2 x}
\]
where $\theta$ is some complex number satisfying $|\theta|\leq 1$.
Thus, comparing the sum over zeros with the sum occuring in the
explicit formula for $\Psi(x)$, we get
\[
\Pi(x)-\li x = \frac{\Psi(x)-x}{\log x} + \theta\left(\frac{1}{\log x}
\sum_\rho\frac{1}{|\rho|^2} + x^{-1/6}\log
x\right)\frac{\sqrt{x}}{\log x}.
\]
Finally, again under the assumption $x>e^{12}$, we have
\[
\pi(x)-\Pi(x)=-\frac{1}{2}\li \sqrt{x}+\theta x^{1/3}.
\]
Putting these
estimates together, we get
\[
\pi(x)-\li x = \frac{\Psi(x)-x}{\log x} - \frac{\sqrt{x}}{\log x}
\theta\left(\frac{1}{\log x} \sum_\rho\frac{1}{|\rho|^2} + x^{-1/6}\log
x\right)\frac{\sqrt{x}}{\log x}.
\]
Hence, under the assumptions $x>e^{12}$ and
$\Delta^*(x)>1.01\sqrt{x}-2$, we get
\[
\pi(x)-\li x \geq \frac{0.01\sqrt{x}}{\log x} -
\frac{0.05\sqrt{x}}{\log^2 x} - 2x^{1/3} -2,
\]
where we used the bound $\sum_\rho\frac{1}{|\rho|^2}<0.05$ (see the
proof of the next lemma). For $x>e^{60}$, the right-hand side of the
last equation becomes positive, and the proof of the lemma is
complete.
\end{proof}

Finally we need the following quantitative version of \cite{JCP}, Lemma 8.

\begin{lem}
We have
\[
|\Delta(t) + \Delta(-t)| < 0.0462
\]
\end{lem}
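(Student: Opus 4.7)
The plan is to exploit the fact that nontrivial zeros come in conjugate pairs (so the set of imaginary parts $\gamma$ is symmetric about $0$) in order to convert $\Delta(t)+\Delta(-t)$ from a conditionally convergent sum into an absolutely convergent one. Substituting $\gamma\mapsto-\gamma$ in $\Delta(-t)$ and combining the two sums term by term gives
\[
\Delta(t)+\Delta(-t) = \sum_\gamma e^{it\gamma}\!\left(\frac{1}{\tfrac12+i\gamma}+\frac{1}{\tfrac12-i\gamma}\right) = \sum_\gamma \frac{e^{it\gamma}}{\tfrac14+\gamma^2},
\]
and the new series is absolutely convergent by Lemma~1. Bounding each term in modulus by $1/(\tfrac14+\gamma^2)$ and using that under RH $|\rho|^2=\rho(1-\rho)$, we obtain
\[
|\Delta(t)+\Delta(-t)| \le \sum_\rho \frac{1}{\rho(1-\rho)}.
\]

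It remains to evaluate $S:=\sum_\rho 1/(\rho(1-\rho))$ explicitly. From the Hadamard product $\xi(s)=\xi(0)\prod_\rho(1-s/\rho)$, with the product taken over conjugate pairs so that it converges, the logarithmic derivative at $s=0$ is $\xi'(0)/\xi(0)=-S/2$. On the other hand, differentiating $\xi(s)=\tfrac12 s(s-1)\pi^{-s/2}\Gamma(s/2)\zeta(s)$ and expanding near $s=0$ using $\Gamma(s/2)\sim 2/s-\gamma_0+O(s)$, $\zeta'(0)/\zeta(0)=\log(2\pi)$, and the simple pole of $1/(s-1)$ at $s=0$, one finds $\xi'(0)/\xi(0)=-1-\tfrac12\gamma_0+\tfrac12\log(4\pi)$, where $\gamma_0$ denotes Euler's constant. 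Hence $S=2+\gamma_0-\log(4\pi)=0.04619\ldots<0.0462$. As a by-product, $\sum_\rho|\rho|^{-2}=S<0.05$, which is precisely the bound invoked in the proof of the preceding lemma.

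The only real technical subtlety is the justification of the reindexing in the first display, since $\Delta(t)$ itself converges only conditionally. This is handled by performing the pairing on the symmetric truncations $\Delta_T(t)+\Delta_T(-t)$ already introduced in the paper (these are finite sums, so the rearrangement is trivial) and then letting $T\to\infty$; the absolute convergence of $\sum 1/(\tfrac14+\gamma^2)$, which follows from $N(T)\ll T\log T$ in Lemma~1, makes the passage to the limit legitimate. No further difficulty is anticipated, as all constants entering the final numerical bound are classical.
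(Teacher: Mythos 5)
Your proof is correct and follows essentially the same route as the paper: pair $\gamma$ with $-\gamma$ (equivalently $\rho$ with $\bar\rho$) to turn $\Delta(t)+\Delta(-t)$ into an absolutely convergent series bounded by $\sum_\rho|\rho|^{-2}$, then evaluate that sum as $2+\gamma_0-\log 4\pi \approx 0.0462$. The only difference is that the paper simply cites Davenport for the value of $\sum_\rho|\rho|^{-2}$, whereas you rederive it from the Hadamard product for $\xi$, which makes the argument self-contained but is not a different approach.
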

\begin{proof}
We have
\begin{eqnarray*}
|g(t) + g(-t)| & = & \frac{1}{2}\left|\sum_\rho \frac{e^{it\gamma} +
e^{-it\gamma}}{\rho} + \frac{e^{it\gamma} + e^{-it\gamma}}{\bar{\rho}}\right|\\
 & = & \frac{1}{2}\left|\sum_\rho \frac{e^{it\gamma} +
e^{-it\gamma}}{|\rho|^2}\right|\\
 & \leq & \sum_\rho\frac{1}{|\rho|^2}\\
 & = & 2 + C - \log\pi - 2\log 2\\
 & = & 0.04619\ldots
\end{eqnarray*}
Here $C = 0.5772\ldots\,$ denotes Euler's constant. The evaluation of
the sum $\sum_\rho\frac{1}{|\rho|^2}$ is given e.g. in \cite{Dav}.
Note that here we have twice the value given in \cite{Dav}, since we
take the sum over all zeros, not only zeros with positive imaginary part.
\end{proof}

\section{Proof of Theorem 2}

Obviously, the lower bound for $V(x)$ implies the bound for the first
sign change, hence, we will only consider the second claim of Theorem
2. Define $\Delta(t)$ and $\Delta_T(t)$ as above. We have for $t>0$
\[
\Psi(e^t) = e^t - e^{t/2}\Delta(t) - \frac{\zeta'}{\zeta}(0) -
\frac{1}{2}\log(1-e^{-2t}) 
\]
For $0<t<\log 2$ this becomes
\[
\Delta(t) = e^{t/2} - \left(\log 2\pi + \frac{1}{2}\log(1-e^{-2t})\right)
e^{-t/2} > 1 - \log 2\pi - \frac{1}{2}\log 2t 
\]
Together with Lemma 7 we obtain for $-\log 2 < t < 0$
\[
\Delta(t) < \frac{1}{2}\log (-t) + 1.25
\]
Especially we have $\Delta(t) < -1$ for $-e^{-4.6} < t < 0$. Now let $T>e^4$ be
a real number to be determined later, $M = N(T)$ the number of zeros of $\zeta$
with $0<\Im\,\rho\leq T$ and $\epsilon = \frac{1}{4\sqrt{M}}$. By
Lemma 5 there exists
a sequence of real numbers $s_i$, $1\leq i\leq N$ satisfying $s_1\geq
1$, $s_{i+1}\geq s_i+1$ and
\begin{equation}
s_N \leq
\frac{N(32\pi^2 M)^{M/2}\Gamma(M/2)}{\pi^{M/2}}<Ne^{\frac{3}{2}M\log M + 4M},
\end{equation}
such that 
\[
\left({\sum_\rho}^* |\arg s\gamma|\right)^2 \leq M{\sum_\rho}^* |\arg s\gamma|^2 \leq \frac{1}{2\sqrt{2}}
\]
where $\arg z$ is chosen to lie in the interval $[-\pi, \pi]$. Note
that with this choice we have $|\arg s\gamma  |\leq2\pi\|s\gamma\|$
For each such $s_i$ and every real $t$ we get
\begin{eqnarray*}
|\Delta_T(t) - \Delta_T(t+s_i)|^2 
 & \leq & \left(\sum_{|\gamma|\leq T} \left|\frac{e^{it\gamma}}{\rho} - \frac{e^{i(t+s_i)\gamma}}{\rho}\right|\right)^2\\
 & \leq & \left(\sum_{0\leq\gamma\leq T} \left|\frac{\arg s\gamma}{\rho}\right|\right)^2\\
 & \leq & \frac{1}{\gamma_0}\left(2\sum_{0\leq\gamma\leq T}|\arg s\gamma|\right)^2\\
 & \leq & \frac{1}{2\gamma_0}\\
 & = & \frac{1}{28.269\ldots}
\end{eqnarray*}
Now assume that $\Delta(t+s_i) > -1.01$ for all $t$ with $-e^{-4.6} < t < 0$.
Then on one hand we get
\begin{eqnarray*}
\int\limits_{e^{-4.6}}^0 |\Delta(t+s_i) - \Delta(t)|^2 dt & < &
\int\limits_{e^{-4.6}}^0 |\Delta(t) - \Delta_T(t)|^2 dt + 
\int\limits_{e^{-4.6}}^0 |\Delta_T(t+s_i) - \Delta_T(t)|^2 dt\\
&&\qquad + 
\int\limits_{e^{-4.6}}^0 |\Delta_T(t+s_i) - \Delta(t+s_i)|^2 dt \\
 & < & \frac{4}{9}\frac{\log^3 T}{T} + \frac{e^{-4.6}}{196}
\end{eqnarray*}
while on the other hand we have
\begin{eqnarray*}
\int\limits_{e^{-4.6}}^0 |\Delta(t+s_i) - \Delta(t)|^2 dt & > &
-\int\limits_0^{e^{-4.6}} (0.5\log t - 2.26)^2 dt\\
 & > & 0.32 e^{-4.6}
\end{eqnarray*}
These estimates contradict each other, provided that
$\frac{4}{9}\frac{\log^3 T}{T} < 0.31 e^{-4.6}$, i.e. for $T >
282000$. Thus we get $M < 590000$, and from (1) we conclude that $s_N
< N\cdot e_2(16.6)$. Now if $t>10$ then $\Delta(t)< -1.01$ implies
$\Psi(e^t) > e^t + 1.01e^{t/2} - 2$ and by Lemma 6 the latter implies
$\pi(e^t)>li\;e^t$, provided that $t>60$. Since there are at most 60
values $s_i$ excluded by the last condition, we see that in the
interval $[2, exp(N\cdot e_2(16.2))]$ there are at least $N-60$
values $x_i$, such that $x_{i+1} > e\cdot x_i$, and
$\pi(x_i)>li\;x_i$. Since 
\[
\int\limits_a^{e\cdot a} \frac{\Psi(e^t)-e^t}{e^{t/2}} dt <
\sum_\rho\frac{2}{|\gamma\rho|} < 0.1
\]
between $x_i$ and $x_{i+1}$ there is some $y_i$ such that $\pi(y_i) < li\;y_i$.
Hence in the interval $[2, exp(N\cdot e_2(16.2))]$ there are at least
$N-60$ sign changes of $\pi(x)-li\;x$. Our claim now follows from the
fact that $61\cdot e_2(16.6)<e_2(16.7)$.

\section{Lemmata for Theorem 1}

Fix a natural number $q>2$, and assume that no $L$-series $\pmod{q}$
vanishes in $\Re\;s > \frac{1}{2}$. In the sequel let $\chi$ be any
charakter $\pmod{q}$. We will prove Theorem 1 under the additional
assumption that $q>e(1260)$, it will be apparent from the proofs that
stronger conclusions than Theorem 1 can be obtained in the case of
small values of $q$, however, we do not believe that these results are
worth the additional effort.

Define the functions $\Delta(t,
\chi)=\sum_\gamma\frac{e^{it\gamma}}{\rho}$ and $\Delta_T(t,
\chi)=\sum_{|\gamma|<T}\frac{e^{it\gamma}}{\rho}$, where both
summations run over the nontrivial roots of $L(s, \chi)$.

\begin{lem}
Denote with $N(T, \chi)$ the number of zeros of $L(s, \chi)$ with
$0<\Re\;\rho<1, |\Im\;\rho|<T$. Then for $q, T > 10$ we have
\[
\left|N(T, \chi) - \frac{T}{\pi}\log\frac{qT}{2\pi} +
\frac{T}{\pi}\right| < \frac{1}{2.1}\log qT + 30 
\]
For $\log qT>1260$ and $q, T>40$ the bounds
\[
N(T, \chi) < \frac{1}{3} T\log qT
\]
and
\[
N(T+1, \chi) - N(T, \chi) < \log qT.
\]
Let $N_+(T, \chi)$ denote the number of zeros with $0\leq\gamma\leq
T$, and $N_-(T, \chi)$ the number of zeros with $0\geq\gamma\geq
-T$. Then we have for $q, T>40$ and $\log qT>1260$ the bound
\[
|N_+(T, \chi) - N_-(T, \chi)| <\frac{5}{4} \log qt.
\]
Finally, we have
\begin{equation}
\sum_\rho\frac{1}{|\rho|^2}\leq 13\log q.
\end{equation}
\end{lem}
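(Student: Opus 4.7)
The plan is to derive all four estimates from the standard zero-counting machinery for Dirichlet $L$-functions, applied to the completed $L$-function
\[
\xi(s,\chi) = (q/\pi)^{(s+a)/2}\Gamma((s+a)/2)L(s,\chi),
\]
where $a\in\{0,1\}$ is determined by the parity of $\chi$. The heart of the argument is the global Riemann--von Mangoldt identity obtained from the argument principle; the remaining bounds are either immediate consequences of it or follow from the same techniques applied locally.

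For the first inequality I would apply the argument principle to $\xi(s,\chi)$ on the rectangle with vertices $-\tfrac{1}{2}\pm iT$ and $\tfrac{3}{2}\pm iT$. The change in argument splits into contributions from $(q/\pi)^{s/2}$, from the Gamma factor (evaluated via Stirling's formula and producing the main term $\tfrac{T}{\pi}\log\tfrac{qT}{2\pi} - \tfrac{T}{\pi}$), and from $L(s,\chi)$ itself. On $\Re s = 3/2$ the Dirichlet series converges absolutely, so $\arg L$ stays uniformly bounded; on the horizontal sides one invokes the Backlund trick, bounding the number of real zeros of $\Re L(\sigma\pm iT,\chi)$ by Jensen's inequality on a suitable disc together with a convexity bound $\log|L(s,\chi)| = O(\log qT)$ in the strip. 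The main obstacle is to carry this out with every constant made explicit: achieving the coefficient $1/2.1$ of $\log qT$ forces a careful simultaneous optimisation of the Jensen radius and the horizontal reach of the rectangle, following McCurley but with every numerical constant pinned down.

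The global bound $N(T,\chi) < \tfrac{1}{3}T\log qT$ is then immediate, since $1/\pi < 1/3$ leaves ample room for the error once $\log qT>1260$. By contrast, the local estimates $N(T+1,\chi) - N(T,\chi) < \log qT$ and $|N_+(T,\chi) - N_-(T,\chi)| < \tfrac{5}{4}\log qT$ cannot be derived by subtracting two instances of the global bound, since that would produce a coefficient of $\log qT$ larger than $1$. Instead I would apply the Backlund/Jensen disc argument directly: to count zeros in the strip $T\leq\Im s\leq T+1$ one bounds the number of zeros of $L(s,\chi)$ in a disc of radius $3/2$ centred at $2+iT$, and for the asymmetry one compares the analogous disc counts at $2\pm iT$ using the fact that $\overline{\rho}$ is a zero of $L(s,\bar\chi)$. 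In both cases the convexity bound gives explicit constants strictly below the target values $1$ and $5/4$.

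For the sum $\sum_\rho |\rho|^{-2}$ I would start from the partial-fraction expansion
\[
\frac{L'}{L}(s,\chi) = B(\chi) - \frac{1}{2}\log\frac{q}{\pi} - \frac{1}{2}\frac{\Gamma'}{\Gamma}\!\left(\frac{s+a}{2}\right) + \sum_\rho\!\left(\frac{1}{s-\rho} + \frac{1}{\rho}\right),
\]
evaluate at $s=2$ where the left-hand side is absolutely bounded, take real parts, and invoke $\Re B(\chi) = -\sum_\rho \Re(1/\rho)$ to collapse the sum over zeros to $\sum_\rho \Re(1/(2-\rho))$. Under GRH, $|\rho|^2 = 1/4+\gamma^2$, and $\Re(1/(2-\rho)) = (3/2)/(9/4+\gamma^2)$ dominates a fixed multiple of $1/|\rho|^2$, so an inequality of the form $\sum_\rho |\rho|^{-2} \leq C\log q$ falls out of rearranging the identity. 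The dominant contribution on the right is $\tfrac{1}{2}\log(q/\pi)$; tracking the Stirling value of the Gamma term at $s=2$ and allowing comfortable slack yields the stated explicit constant $13$.
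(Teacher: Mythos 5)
Your overall plan runs along the right lines, and the first two items match the paper's route exactly: the paper simply cites McCurley's Theorem~2.1 (with $\eta=0.01$) for the explicit Riemann--von Mangoldt formula, and observes that $N(T,\chi)<\frac{1}{3}T\log qT$ is an immediate corollary once $\log qT$ is large. You are also right that subtracting two instances of the global formula falls short of the local bounds, since the error term contributes roughly $\frac{2}{2.1}\log qT$ on its own; something local is needed.

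Where the proposal and the paper part ways is in how those local bounds are obtained, and here there is a concrete error in your version. You propose bounding zeros with $T\leq\Im s\leq T+1$ by the zero count in a disc of radius $3/2$ centred at $2+iT$. But under GRH every nontrivial zero sits at $\Re\rho=\tfrac12$, so the distance from $2+iT$ to a zero $\tfrac12+i\gamma$ with $T\leq\gamma\leq T+1$ is $\sqrt{(3/2)^2+(\gamma-T)^2}\geq 3/2$, with the largest value $\sqrt{13}/2\approx1.80$. A disc of radius $3/2$ therefore captures at most the single boundary point $\gamma=T$ and misses the entire strip. You would need radius at least $\sqrt{13}/2$, at which point the factor $\log(R/r)$ entering Jensen's inequality becomes uncomfortably small and the resulting constant climbs well above $1$ or even $5/4$ unless the Jensen centre and convexity bounds are optimised very aggressively. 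The paper sidesteps this by working instead with the partial-fraction expansion
\[
-\Re\frac{L'}{L}(s,\chi)=\frac12\log\frac{q}{\pi}+\frac12\Re\frac{\Gamma'}{\Gamma}\Bigl(\frac{s+a}{2}\Bigr)-\Re\sum_\rho\frac{1}{s-\rho}
\]
evaluated at $s=\tfrac54+i(T+\tfrac12)$; with this choice each zero in the target band contributes at least $\tfrac{12}{13}$ to the sum, and the contradiction with the uniform lower bound $-\Re\frac{L'}{L}(\tfrac54+it,\chi)\geq\frac{\zeta'}{\zeta}(\tfrac54)$ yields the explicit constant directly. That mechanism, not a Jensen disc, is what is really doing the work in the paper for the third and fourth assertions.

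Your treatment of $\sum_\rho|\rho|^{-2}$ genuinely differs from the paper, which only remarks that this bound "follows from the other estimates" (i.e.\ by dyadically summing the density bound $N(T+1,\chi)-N(T,\chi)<\log qT$). Your route through the Hadamard product at $s=2$, using $\Re B(\chi)=-\sum_\rho\Re(1/\rho)$ and the observation that under GRH $\Re\frac{1}{2-\rho}=\frac{3/2}{9/4+\gamma^2}$ is comparable to $|\rho|^{-2}$, is valid and arguably cleaner; it gives the constant in one pass rather than by summation. Either works, so this is a legitimate alternative rather than a gap.
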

\begin{proof}
The asymptotic bound for $N(T, \chi)$ follows from \cite[Theorem
2.1]{McK} setting $\eta=0.01$. The upper bound for $N(T, \chi)$
follows immediatelly from this estimate. For the upper bound for
$N(T+1, \chi) - N(T, \chi)$ we begin with the equation
\begin{equation}
\label{eq:Lapprox}
-\Re\frac{L'}{L}(s, \chi) = \frac{1}{2}\log\frac{q}{\pi} +
\frac{1}{2}\Re\frac{\Gamma'}{\Gamma}\left(\frac{s+a}{2}\right)-
\Re\sum_\rho\frac{1}{s-\rho},
\end{equation}
where the summation over the zeros has to be taken with respect to
increasing imaginary part, and $a=\frac{1-\chi(-1)}{2}$. To bound the
term coming from the $\Gamma$-function, we use the estimate (see\cite[6.1.42]{AS})
\[
\left|\log\Gamma(z) - \left(z-\frac{1}{2}\right)\log z + z - \frac{1}{2}\log
2\pi - \frac{1}{12 z}\right| \leq \frac{K(z)}{360|z^3|},
\]
where $K(z)=\sup_{u\in\R}\left|\frac{z^2}{z^2+u^2}\right|$, which for $\Re
z\in[5/4, 7/4]$ and $\Im z>40$ implies
\[
\left|\frac{\Gamma'}{\Gamma}(z)-\log z\right|\leq\frac{1}{79}, 
\]
which together with (\ref{eq:Lapprox}) implies
\[
-\Re\frac{L'}{L}(5/4+it, \chi)\leq \frac{1}{2}\log qt -
\Re\sum_\rho\frac{1}{s-\rho}-\frac{1}{2}.
\]
Set $t=T+1/2$, and assume that $N(T+1, \chi)-N(T,
\chi)>\frac{5}{4}\log qT$. Then 
every zero with imaginary part in the range $[T, T+1]$ would
contribute at least $\frac{12}{13}$ to the right-hand side sum, and the
last inequality would imply
\[
-\Re\frac{L'}{L}(5/4+it, \chi)\leq -\frac{2}{13}\log qT\leq -193,
\]
which would contradict the lower bound
\[
-\Re\frac{L'}{L}(5/4+it, \chi) \geq \frac{\zeta'}{\zeta}(5/4) \geq
\]
Finally, the bound comparing $N_+(T, \chi)$ and $N_-(T, \chi)$ can be
proven in the same way as \cite[Theorem 2.1]{McK}, and the bound
for $\sum_\rho\frac{1}{|\rho|^2}$ follows from the other estimates. 
\end{proof}

Just as in section 2 we get

\begin{lem}
Let $a>b>0$ be real numbers with $a-b < \frac{1}{324}$ and $T>e^4$. Set
$\Delta_T(t) = \sum_{|\gamma|>T}\frac{e^{it\gamma}}{1/2+i\gamma}$. Then we have
\[
\int\limits_a^b |\Delta_T(t)|^2 dt = \sum_{|\gamma_1|, |\gamma_2|>T}\frac{1}{(1/2+i\gamma_1)(1/2+i\gamma_2)}
\frac{e^{b(\gamma_1+\gamma_2)} - e^{a(\gamma_1+\gamma_2)}}{\gamma_1+\gamma_2}
< \frac{2}{9}\frac{\log^3 qT}{T}
\]
\end{lem}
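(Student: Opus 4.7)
The plan is to mirror the proof of Lemma 4 for $\zeta$-zeros essentially verbatim, substituting the $L$-function density estimates from Lemma 8 for the $\zeta$-bounds from Lemma 3. Every $\log T$ becomes $\log qT$, and the admissible interval length contracts from $1/36$ to $1/324=(1/36)/9$, reflecting the factor $3$ that appears in both $N(T,\chi)<\tfrac13 T\log qT$ and the short-interval bound $N(T+1,\chi)-N(T,\chi)<\log qT$.

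For the identity I would first expand $|\Delta_T(t)|^2$ as the double sum
\[
\sum_{|\gamma_1|,|\gamma_2|>T}\frac{e^{it\gamma_1}}{1/2+i\gamma_1}\,\overline{\frac{e^{it\gamma_2}}{1/2+i\gamma_2}},
\]
use the symmetry $\gamma\mapsto-\gamma$ on the zero set to absorb the conjugation into a relabelling of $\gamma_2$, and interchange the sum with $\int_b^a\!\cdot\,dt$. The interchange is justified because Lemma 8 yields $\sum_{|\gamma|>T}|\gamma|^{-2}\ll\log qT/T$, so the double sum is absolutely convergent over the bounded interval. The inner integral then evaluates to the stated ratio of exponentials, with the diagonal $\gamma_1+\gamma_2=0$ handled via the limit value $b-a$.

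For the numerical bound I would use $|1/2+i\gamma_j|^{-1}\le|\gamma_j|^{-1}$ together with the elementary inequality $\left|(e^{b\mu}-e^{a\mu})/\mu\right|\le\min(a-b,\,2/|\mu|)$ with $\mu:=\gamma_1+\gamma_2$, then split the double sum dyadically on $|\mu|$. On the near-diagonal scale $|\mu|\le 2/(a-b)$ I apply the $(a-b)$ estimate and count the $\gamma_2$ close to $-\gamma_1$ using the short-interval bound $N(T+1,\chi)-N(T,\chi)<\log qT$ from Lemma 8; on each larger dyadic scale $|\mu|\in[2^k\cdot 2/(a-b),\,2^{k+1}\cdot 2/(a-b))$ I apply the $2/|\mu|$ estimate with the same spacing bound. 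Summing the dyadic contributions, the per-$\gamma_1$ bound has shape $\log^2(q|\gamma_1|)/|\gamma_1|^2$, and Abel summation against $N(t,\chi)$ from Lemma 8 then yields $\sum_{|\gamma_1|>T}\log^2(q|\gamma_1|)/|\gamma_1|^2\ll\log^3qT/T$, which is the right order of magnitude.

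The main technical obstacle will be carrying the explicit constants through so that the coefficient $2/9$ emerges exactly. Each appeal to Lemma 8 carries a specific constant ($1/3$, $1$, and $13$ in the three estimates it collects), and after the dyadic summation these must combine with the hypothesis $a-b<1/324$ to reproduce the stated $\frac{2}{9}\log^3 qT/T$; the peculiar threshold $1/324$ is precisely what makes this bookkeeping work out, mirroring the role $1/36$ plays in Lemma 4. Verifying the side hypotheses of Lemma 8 — namely $\log qT>1260$ and $q,T>40$ — is immediate from the running assumption $q>e(1260)$ together with $T>e^4$, so no additional work is needed on that front.
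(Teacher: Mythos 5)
Your plan is essentially the paper's own treatment: the paper gives no proof of this lemma beyond the remark ``just as in Section 2,'' which is precisely the expand-the-square, integrate-term-by-term, and estimate-with-$\min\bigl(a-b,\,2/|\mu|\bigr)$ argument you outline, with Lemma 8 supplying the zero-counting input and $\log qT$ replacing $\log T$ throughout, followed by the same dyadic/partial-summation bookkeeping. Two small cautions: for a single complex character the ordinates are not symmetric under $\gamma\mapsto-\gamma$, so instead of relabelling you should keep the conjugated factor $\frac{1}{1/2-i\gamma_2}$ and work with $\mu=\gamma_1-\gamma_2$ (the paper's displayed identity, which also omits the factors $i$ in the exponentials, has the same cosmetic defect, and the final bound is unaffected); and the explicit constant $\frac{2}{9}$ still has to be carried through the dyadic summation, a verification the paper likewise leaves implicit.
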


For $x>1$ we have the explicit formula
\[
\Psi(x, \chi) = E_\chi x - \sqrt{x}\sum_\rho\frac{e^{i\gamma\log x}}{\rho} - d_\chi\log x - R(x, \chi) + B(\chi)
\]
where
\begin{eqnarray*}
E_\chi & = & \left\{\begin{array}{ll}1 & \mbox{ if }\chi = \chi_0\\ 0
    & \mbox{ otherwise}\end{array}\right.\\ 
d_\chi & = & \left\{\begin{array}{lcl}1 & \mbox{ if } & \chi(-1) = 1,
    \chi\neq\chi_0\\ 
         0 & \mbox{ if } & \chi(-1) = -1\mbox{ or }\chi=\chi_0
    \end{array}\right.\\ 
R(x, \chi) & = & \left\{\begin{array}{lcl}\frac{1}{2}\log(1-x^{-2}) &
    \mbox{ if } & \chi(-1) = 1\\ 
        \frac{1}{2}\log(1-x^{-2}) + \log\frac{x}{x+1}  & \mbox{ if } &
    \chi(-1) = -1\end{array}\right.\\ 
B(\chi) & = & -E_\chi + \log 2 - C + \log\frac{q}{\pi} +
    \frac{L'}{L}(1, \bar{\chi})\\ 
\end{eqnarray*}
The value of $B(\chi)$ can be obtained using the functional equation, see
\cite[Lemma 1]{Kac2}. 
Define $\Delta(t, q, a) := \frac{1}{\varphi(q)}\sum_\chi\chi(a)
\Delta(t, \chi)$. To estimate $\Delta(t, q, a)$ in a neighbourhood of
0, we need an upper bound for $B(\chi)$, and hence for $\frac{L'}{L}(1, \chi)$.

\begin{lem}
Let $q>10$ be an integer, and $\chi$ a character $\pmod{q}$. Then
there is some constant $\theta$ of absolute value at most 1, such that 
\[
\left|\sum_\chi \overline{\chi(a)}\frac{L'}{L}(1, \chi)\right| =
\frac{\varphi(q)\Lambda(a)}{a} + \vartheta\big(2\log^2 q +
9\sqrt{\varphi(q)\log q}\big).
\]
\end{lem}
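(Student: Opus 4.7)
The starting point is the orthogonality identity
\[
\sum_\chi \overline{\chi(a)}\frac{L'}{L}(s,\chi) = -\varphi(q)\!\!\sum_{n\equiv a\,(q)}\!\!\frac{\Lambda(n)}{n^s}\qquad(\Re s>1),
\]
and the strategy is to pass to $s=1$ by truncating each $L'/L(1,\chi)$ to a Dirichlet polynomial of length $X\asymp q$, then using orthogonality to pick out only the term $n=a$.

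\textbf{Truncation via the explicit formula.} Partial summation gives $\sum_{n\leq X}\Lambda(n)\chi(n)/n = \Psi(X,\chi)/X + \int_1^X \Psi(y,\chi)y^{-2}dy$; substituting the explicit formula for $\Psi(y,\chi)$ together with $B(\chi) = -E_\chi + \log 2 - C + \log(q/\pi) + L'/L(1,\bar\chi)$ and computing the zero-sum integral via $\int_1^X y^{i\gamma-3/2}dy = (X^{i\gamma-1/2}-1)/(i\gamma-1/2)$ yields
\[
\frac{L'}{L}(1,\chi) = -\sum_{n\leq X}\frac{\Lambda(n)\chi(n)}{n} + E_\chi\log X + \mathcal{E}(\chi,X),
\]
where $\mathcal{E}(\chi,X)$ gathers the boundary term $\Psi(X,\chi)/X$, the tail $X^{-1/2}\sum_\rho e^{i\gamma\log X}/|\rho|^2$, and bounded Mertens-type constants.

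\textbf{Orthogonality and choice of $X$.} Multiplying by $\overline{\chi(a)}$ and summing over $\chi$, orthogonality collapses the Dirichlet polynomial to $\varphi(q)\sum_{n\leq X,\,n\equiv a\,(q)}\Lambda(n)/n$, and $\sum_\chi\overline{\chi(a)}E_\chi = \overline{\chi_0(a)} = 1$. Taking $X \in [a,a+q)$, with $a$ the residue in $\{1,\dots,q\}$, leaves only $n=a$ in the inner sum, contributing exactly $\Lambda(a)/a$. Thus
\[
T = -\varphi(q)\frac{\Lambda(a)}{a} + \log X + \sum_\chi\overline{\chi(a)}\mathcal{E}(\chi,X),
\]
with $\log X \leq \log(2q) < 2\log q$; taking absolute values reduces the lemma to bounding the character sum of the $\mathcal{E}(\chi,X)$.

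\textbf{Error bound and main obstacle.} The remaining sum $\sum_\chi\overline{\chi(a)}\mathcal{E}(\chi,X)$ splits into the boundary contribution and the zero-sum tail; each is handled by Cauchy--Schwarz combined with the GRH second-moment bound $\sum_\chi|\Psi(X,\chi)-E_\chi X|^2 \ll \varphi(q)X\log^2 qX$, itself a consequence of the explicit formula and Lemma~8's estimate $\sum_\rho 1/|\rho|^2 \leq 13\log q$. For $X\asymp q$ both pieces yield $O(\sqrt{\varphi(q)\log q})$, while secondary contributions of size $O(\log^2 q)$ coming from $d_\chi\log y$, $R(y,\chi)$, and the Hadamard-type constants account for the $2\log^2 q$ part of the bound. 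The delicate point is extracting this square-root saving: a naive Cauchy--Schwarz against the pointwise estimate $\sum_\rho 1/|\rho|^2 \leq 13\log q$ gives only $\varphi(q)\log q$, which is far too large, so one must view $\sum_\chi\overline{\chi(a)}(\cdot)$ as a Parseval-type mean over characters and carefully balance the truncation length $X$ against the size of $\mathcal{E}$ to realise the cancellation.
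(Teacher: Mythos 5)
Your plan diverges from the paper, and at its crucial step it has a genuine gap rather than a proof. The step you yourself flag as delicate --- the character-averaged second moment $\sum_\chi|\Psi(X,\chi)-E_\chi X|^2\ll \varphi(q)X\log^2 qX$ for a \emph{fixed} $X$ --- does not follow from the explicit formula together with $\sum_\rho|\rho|^{-2}\le 13\log q$. Expanding the square via the explicit formula produces off-diagonal terms $\sum_\chi\sum_{\rho_1\ne\rho_2}X^{i(\gamma_1-\gamma_2)}/(\rho_1\overline{\rho_2})$ which cannot be controlled pointwise in $X$; without an additional averaging over $X$ (or some other device) the best you get per character from GRH is $|\Psi(X,\chi)-E_\chi X|\ll\sqrt{X}\log^2 qX$, whose square summed over $\chi$ is $\varphi(q)X\log^4 qX$, and then Cauchy--Schwarz returns exactly the trivial bound $\approx\varphi(q)\log^2(qX)/\sqrt X\approx\sqrt q\log^2 q$, which is larger than $9\sqrt{\varphi(q)\log q}$ by a power of $\log q$. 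Worse, even if one grants your second-moment claim, the choice $X\asymp q$ (forced by your wish that only $n=a$ survive orthogonality) gives an error of size about $\varphi(q)\log(qX)/\sqrt X\asymp\varphi(q)\log q/\sqrt q$, which still exceeds $\sqrt{\varphi(q)\log q}$ by roughly $\sqrt{\log q}$ for typical $q$. To make the truncation idea work you must take $X$ at least of order $\varphi(q)\log q$, and then the surviving terms $q<n\le X$, $n\equiv a\pmod q$ have to be bounded by Brun--Titchmarsh --- at which point you have essentially reconstructed the Masley--Montgomery argument the paper invokes. Finally, the lemma is fully explicit (constants $2$ and $9$), while your write-up runs on $\ll$ and $O$-symbols, so even the non-problematic steps would need to be redone with numerical constants.

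For comparison, the paper avoids any sharp cutoff and any second moment over characters: it sets $f(s)=\sum_\chi\overline{\chi(a)}\frac{L'}{L}(s,\chi)$, quotes the Brun--Titchmarsh estimate of Masley and Montgomery to control $f(\sigma)$ for $\sigma>1$ (error about $\log^2 q+\frac{1}{(\sigma-1)\log 2}$), bounds $f'(\sigma)$ on $(1,\sigma)$ under GRH by $\ll\varphi(q)\log q$ via $\sum_\rho(\sigma-\rho)^{-2}$ and the zero-counting lemma, and then transfers from $\sigma$ to $1$ by the mean value theorem, choosing $\sigma=1+1/\sqrt{7\varphi(q)\log q}$. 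The $\sqrt{\varphi(q)\log q}$ term arises from this optimization, not from any cancellation over characters; this is the mechanism your proposal is missing.
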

\begin{proof}
The proof will be similar to the estimate given by Masley and
Montgomery\cite{MM}, however, things become easier since we assume GRH here.
Set $f(s) = \sum_\chi\overline{\chi(a)}\frac{L'}{L}(s, \chi)$, thus we have
to estimate $f(1)$. Using the
Brun-Titchmarsh inequality we get for $\sigma > 1$ the estimate
\[
\left|f(\sigma) - \frac{\Lambda(a)\varphi(q)}{a^\sigma}\right| < \frac{\Lambda(q+a)\varphi(q)}{(q+a)^\sigma}
 + 3 + \frac{1}{(\sigma-1)\log 2} + \log^2 q
\]
(see \cite{MM}, Lemma 1). Now differentiating the partial fraction
decomposition of $\frac{L'}{L}$ we get for $\sigma> 1$
\[
f'(\sigma) =
\sum_\chi\overline{\chi(a)}\sum_\rho\frac{1}{(\sigma-\rho)^2} +
\vartheta 
\]
where the inner sum runs over all nontrivial zeros of $L(s, \chi)$ and
$|\vartheta|<1$. We assume $\Re\;\rho=\frac{1}{2}$ for all $\rho$, so the
inner sum can be estimated using lemma 3 by $10\log q$, thus
$|f'(\sigma)| < 10\varphi(q)\log q + 1$. Finally
\[
\left|f(1) - \frac{\Lambda(a)\varphi(q)}{a^\sigma}\right| < 3 + \log^2 q + \log q + 
 \frac{1}{(\sigma-1)\log 2} + 10(\sigma-1)\varphi(q)\log q + (\sigma-1)
\]
Choosing $\sigma = 1 + \frac{1}{\sqrt{7\varphi(q)\log q}}$ we obtain
\[
\left|f(1) -  \frac{\Lambda(a)\varphi(q)}{a}\right| < 2\log^2 q +
8\sqrt{\varphi(q)\log q}, 
\]
which proves our claim.
\end{proof}
Now we have enough information to give an estimate for $\Delta(t, q,
a)$ for $t$ close to 0.

\begin{lem}
For $0<t<\log 2$, $q>e^{32}$ we have for some real $\theta$ satisfying
$|\theta|<1$ the estimate
\[
\Delta(t, q, 1) = \left(\log q - \frac{1}{2}\log(1-e^{-2t}) +
2\theta\right)e^{-t/2},
\]
and for $a\neq 1\pmod{q}$ we have the bound
\[
|\Delta(t, q, a)|\leq 3
\]
\end{lem}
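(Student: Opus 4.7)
The plan is to exploit the fact that for $0<t<\log 2$ we have $e^t<2$, so $\Psi(e^t,\chi)=0$ for every character $\chi\pmod q$. Solving the explicit formula for the zero sum yields
\[
\Delta(t,\chi) = e^{-t/2}\bigl[E_\chi e^t - d_\chi t - R(e^t,\chi) + B(\chi)\bigr],
\]
and the lemma reduces to computing $\frac{1}{\varphi(q)}\sum_\chi \chi(a)(\cdots)$ via orthogonality and the fact that exactly $\varphi(q)/2$ characters $\pmod q$ are even when $q>2$.

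For $a=1$ all factors $\chi(1)$ equal $1$, and the four elementary character sums give $\sum_\chi E_\chi=1$, $\sum_\chi d_\chi=\varphi(q)/2-1$, $\sum_\chi R(e^t,\chi)=\tfrac{\varphi(q)}{2}\log(1-e^{-2t})+\tfrac{\varphi(q)}{2}\log\tfrac{e^t}{e^t+1}$, and $\sum_\chi B(\chi) = -1+\varphi(q)\bigl(\log 2-C+\log(q/\pi)\bigr)+\sum_\chi\tfrac{L'}{L}(1,\bar\chi)$. Lemma~10 applied at $a=1$ (where $\Lambda(1)=0$) gives $\bigl|\sum_\chi\tfrac{L'}{L}(1,\bar\chi)\bigr|\le 2\log^2 q+9\sqrt{\varphi(q)\log q}$. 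Dividing by $\varphi(q)$ yields
\[
\Delta(t,q,1)=e^{-t/2}\Bigl[\log q-\tfrac12\log(1-e^{-2t})-\tfrac{t}{2}+\tfrac12\log(1+e^{-t})+(\log 2-C-\log\pi)+E\Bigr],
\]
where $E=O(1/\varphi(q))+O\bigl((\log^2 q+\sqrt{\varphi(q)\log q})/\varphi(q)\bigr)$. The three bounded main pieces satisfy $|{-t/2}|\le\tfrac12\log 2$, $0<\tfrac12\log(1+e^{-t})\le\tfrac12\log 2$, and $\log 2-C-\log\pi\approx -1.03$; their sum has absolute value under $1.8$. Under $q>e^{32}$ the classical Mertens bound $\varphi(q)\gg q/\log\log q$ makes $E$ negligible, so the bracket minus the two explicit main terms has absolute value $<2$, giving $|\theta|<1$.

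For $a\not\equiv 1\pmod q$ orthogonality $\sum_\chi\chi(a)=0$ kills every $\chi$-independent piece of $B(\chi)$, crucially the $\log(q/\pi)$ term that produced the main $\log q$ growth. What survives is (i) $e^t/\varphi(q)$ from $E_\chi$; (ii) a $d_\chi t$-contribution of size $O(1/\varphi(q))$ unless $a\equiv-1\pmod q$, where it reaches $\approx t/2$; (iii) a similar $\tfrac12\log\tfrac{e^t}{e^t+1}$ contribution from odd characters, nonzero only when $a\equiv-1$; and (iv) $\Lambda(a)/a$ from Lemma~10, bounded by $1/e$, plus the same small error $E$. Each surviving piece is less than $1/e\approx 0.37$ or $\tfrac12\log 2\approx 0.35$ in absolute value, so their total stays well below $1$ and hence $|\Delta(t,q,a)|\le e^{-t/2}\cdot(\text{total})<3$ with large margin.

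The entire proof is character-sum bookkeeping; the only mild subtlety is the exceptional behaviour at $a\equiv-1\pmod q$, where certain sums over even or odd characters that are generically zero jump to size $\varphi(q)/2$. Verifying that the Lemma~10 error is negligible is routine once $q>e^{32}$ and $\varphi(q)\gg q/\log\log q$ are invoked.
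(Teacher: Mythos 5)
Your proposal is correct and follows essentially the same route as the paper: set $\Psi(e^t,\chi)=0$ for $0<t<\log 2$, invert the explicit formula to express $\Delta(t,\chi)$ in elementary terms plus $B(\chi)$, average over characters with orthogonality, split into the cases $a\equiv1$, $a\equiv-1$, $a\not\equiv\pm1\pmod q$, and invoke Lemma~10 to control the $L'/L(1,\bar\chi)$ sum. The only cosmetic divergence is that the paper bounds the error via $\varphi(q)>\sqrt q$ rather than the Mertens estimate $\varphi(q)\gg q/\log\log q$, but either suffices once $q>e^{32}$.
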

\begin{proof}
We consider three cases: $a\equiv 1\pmod{q}$, $a\equiv -1\pmod{q}$ and
$a\not\equiv \pm1\pmod{q}$.

For $a\not\equiv 1\pmod{q}$, all contributions to $\Delta(t, \chi)$,
which are independent of $\chi$ cancel, if further $a\not\equiv
-1\pmod{q}$ terms depending only on $\chi(-1)$ cancel as well, 
so if $a\not\equiv\pm 1\pmod{q}$ we get for $0<t<\log 2$
\begin{eqnarray*}
\Delta(t, q, 1) & = & \frac{e^{t/2}-e^{-t/2}}{\varphi(q)} +
\frac{te^{-t/2}}{\varphi(q)} + \frac{e^{-t/2}}{\varphi(q)}
\sum_\chi\overline{\chi(a)}\frac{L'}{L}(1, \bar{\chi})\\ 
 & = & \frac{e^{t/2}-e^{-t/2}}{\varphi(q)} + \frac{te^{-t/2}}{\varphi(q)} +
 \frac{\Lambda(a)e^{-t/2}}{a} + \frac{\vartheta e^{-t/2}}{\varphi(q)}
\left(2\log^2 q + 8\sqrt{\varphi(q)\log q}\right).
\end{eqnarray*}
For $a\equiv -1\pmod{q}$ we get
\begin{eqnarray*}
\Delta(t, q, -1) & = & \frac{e^{t/2}}{\varphi(q)} +
\frac{(\varphi(q)/2-1)te^{-t/2}}{\varphi(q)} +
\frac{1}{2}\log\frac{e^t}{e^t+1} + \frac{e^{-t/2}}{\varphi(q)}
\sum_\chi\overline{\chi(a)}\frac{L'}{L}(1, \bar{\chi})\\ 
 & = & \frac{e^{t/2}}{\varphi(q)} + \frac{(\varphi(q)/2-1)te^{-t/2}}
{\varphi(q)} +\frac{1}{2}\log\frac{e^t}{e^t+1} + \frac{\vartheta e^{-t/2}}
{\varphi(q)}
\left(2\log^2 q + 8\sqrt{\varphi(q)\log q}\right).
\end{eqnarray*}
Finally for $a\equiv 1\pmod{q}$ we get
\begin{eqnarray*}
\Delta(t, q, 1) & = & \frac{e^{t/2}-e^{-t/2}}{\varphi(q)} +
\frac{(\varphi(q)/2-1)te^{-t/2}}{\varphi(q)} + \frac{e^{-t/2}}{\varphi(q)}
\sum_\chi\overline{\chi(a)}\frac{L'}{L}(1, \bar{\chi})\\
&&\qquad + e^{-t/2}\big(\log 2 - C + \log\frac{q}{\pi}-\frac{1}{2}
\log(1-e^{-2t})\big)\\ 
 & = & \frac{e^{t/2}-e^{-t/2}}{\varphi(q)} + \frac{(\varphi(q)/2-1)te^{-t/2}}
{\varphi(q)}+ e^{-t/2}\big(\log 2 - C + \log\frac{q}{\pi}-\frac{1}{2}
\log(1-e^{-2t})\big)\\ 
&&\qquad + \frac{\vartheta e^{-t/2}}
{\varphi(q)}
\left(2\log^2 q + 8\sqrt{\varphi(q)\log q}\right).
\end{eqnarray*}
For $q>6$ we have $\varphi(q)>\sqrt{q}$, using this together with the
bound $q>e^{32}$ we can conclude that the terms involving $\theta$ are
of absolute value $\leq 0.02$, and all the other terms with the
exception of $\frac{1}{2}\log(1-e^{-2t})$ and $\log q$ can easily be
bounded absolutely. Putting these bounds together, we obtain our
claim.
\end{proof}

\begin{lem}
We have for $|x|\leq 0.01$ and $q\geq\exp(1260)$ the bounds
\[
\left|\int_0^x\Delta(t, \chi)+\Delta(-t, \chi)\;dt\right| < 53x\log q
\]
and
\[
\left|\int_0^x\Delta(t, q, a)+\Delta(-t, q, a)\;dt\right| < 53x\log q.
\]
\end{lem}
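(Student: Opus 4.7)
The plan is to interchange integration and summation, reducing the integral to an absolutely-convergent series over the nontrivial zeros, then exploit the decomposition $\frac{1}{\rho} = \frac{1/2-i\gamma}{|\rho|^2}$ to separate the contributions that are ``symmetric'' and ``antisymmetric'' in the imaginary parts of the zeros.

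First I would justify the interchange via the $L^2$-tail estimate of Lemma 9 (which shows that the truncations of $\Delta(\cdot,\chi)$ at height $T$ converge to $\Delta(\cdot,\chi)$ in $L^2$ on any short interval as $T\to\infty$). This yields
\[
\int_0^x\bigl(\Delta(t,\chi)+\Delta(-t,\chi)\bigr)\,dt \;=\; 2\sum_\rho\frac{\sin(x\gamma)}{\rho\gamma},
\]
where the right-hand series converges absolutely since each term is at most $\min(x,1/|\gamma|^2)/|\rho|$ and Lemma 8 controls the partial sums. Using $\frac{2}{\rho\gamma} = \frac{1}{\gamma|\rho|^2} - \frac{2i}{|\rho|^2}$, this expression splits as
\[
\sum_\rho \frac{\sin(x\gamma)}{\gamma|\rho|^2}\;-\; 2i\sum_\rho \frac{\sin(x\gamma)}{|\rho|^2}.
\]
The first (symmetric) piece is easy: since $|\sin(x\gamma)/\gamma|\leq x$ for every $\gamma\neq 0$, its absolute value is at most $x\sum_\rho 1/|\rho|^2 \leq 13x\log q$ by bound~(2) of Lemma 8.

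The main obstacle is the second (antisymmetric) piece. If $\chi$ were real the zeros would come in complex-conjugate pairs, the summand $\sin(x\gamma)/|\rho|^2$ would be odd in $\gamma$, and the sum would vanish identically. For complex $\chi$ the cancellation is only approximate, and one must exploit the discrepancy bound $|N_+(T,\chi)-N_-(T,\chi)| < \tfrac{5}{4}\log qT$ of Lemma 8. Writing the sum as a Stieltjes integral against $\epsilon(u) := N_+(u,\chi)-N_-(u,\chi)$ and integrating by parts yields
\[
\sum_\rho \frac{\sin(x\gamma)}{|\rho|^2} \;=\; -\int_0^\infty \epsilon(u)\,\frac{d}{du}\left(\frac{\sin(xu)}{1/4+u^2}\right)du.
\]
The derivative decomposes as $\frac{x\cos(xu)}{1/4+u^2} - \frac{2u\sin(xu)}{(1/4+u^2)^2}$; substituting $|\epsilon(u)|\leq\tfrac{5}{4}\log qu$ and splitting the integral at $u=1/x$ (using $|\sin(xu)|\leq xu$ on $[0,1/x]$ and $|\sin(xu)|\leq 1$ on $[1/x,\infty)$) gives a bound of order $x\log q$, the dominant contribution coming from the first derivative piece on $[0,1/x]$, where an integral of the form $\int_0^\infty \frac{\log qu}{1/4+u^2}\,du = \pi\log(q/2)$ appears. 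Careful bookkeeping of the constants should deliver at most $40x\log q$ for this piece, so that the total is below $53x\log q$.

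Finally, the bound for $\Delta(t,q,a)$ follows immediately from the character version: since $\Delta(t,q,a) = \frac{1}{\varphi(q)}\sum_\chi \chi(a)\Delta(t,\chi)$ and $|\chi(a)|\leq 1$, the triangle inequality over the $\varphi(q)$ characters preserves the bound $53x\log q$. The chief difficulty throughout lies in the antisymmetric piece: the naive absolute bound on $\sum_\rho \sin(x\gamma)/|\rho|^2$ either loses the factor of $x$ (using $|\sin|\leq 1$, which gives merely $O(\log q)$) or diverges (using $|\sin(x\gamma)|\leq x|\gamma|$ term by term), so the discrepancy bound from Lemma 8 is what recovers the correct order of magnitude.
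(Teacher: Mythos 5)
Your route is genuinely different from the paper's. The paper pairs the $n$-th zero with positive imaginary part against the $n$-th with negative imaginary part, argues from Lemma 8 that the paired ordinates are close, and bounds each paired summand directly by $\frac{4}{|\rho_n|^2}+\frac{1}{|\rho_n|}\min(4,2t)$ before integrating and splitting the sum at $\gamma_n\approx x^{-2}$. You instead integrate term by term to get $2\sum_\rho\sin(x\gamma)/(\rho\gamma)$, split via $1/\rho=\bar\rho/|\rho|^2$ into a ``symmetric'' sum controlled by $\sum_\rho|\rho|^{-2}\le 13\log q$ and an ``antisymmetric'' sum $\sum_\rho\sin(x\gamma)/|\rho|^2$, and treat the latter as a Stieltjes integral against the discrepancy $\epsilon(u)=N_+(u,\chi)-N_-(u,\chi)$, integrating by parts. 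The decomposition is clean and the conceptual core is the same ingredient (the $N_+-N_-$ bound of Lemma 8) that the paper also leans on, so the approaches are comparable in strength; yours arguably makes the role of the zero-asymmetry more transparent.

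There is, however, a concrete gap in the antisymmetric estimate. Lemma 8 gives $|\epsilon(u)|<\tfrac54\log qu$ only for $u>40$ (and $\log qu>1260$). On $[0,40]$ you have no quantitative bound other than the trivial $|\epsilon(u)|\le N(40,\chi)$, and by Lemma 8's asymptotic this is roughly $\tfrac{40}{\pi}\log q\approx 12.7\log q$. Since $\int_0^{40}\frac{du}{1/4+u^2}\approx\pi$, the $x\cos(xu)/(1/4+u^2)$ part of $f'$ already contributes on the order of $40x\log q$ from $[0,40]$ alone (and the $2u\sin(xu)/(1/4+u^2)^2$ part gives a comparable amount); after the outer factor $2$, this overwhelms the target $53x\log q$. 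So the ``careful bookkeeping'' you defer is not a formality: as it stands, the proof either needs the discrepancy bound extended down to $u\ge 1$ (plausible, but not what Lemma 8 states) or a genuinely sharper bound on $\epsilon(u)$ for $u\in[1,40]$. Until that is supplied, the claimed constant $40$ for the antisymmetric piece, and hence the final $53x\log q$, is not established.

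A minor slip: you assert each term of $\sum_\rho\sin(x\gamma)/(\rho\gamma)$ is at most $\min(x,1/|\gamma|^2)/|\rho|$; the correct bound is $\min(|x|,1/|\gamma|)/|\rho|$, which still gives absolute convergence via Lemma 8, so this does not affect the argument.
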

\begin{proof}
It suffices to prove the first inequality, since the second is
obtained by averaging over all characters. Denote with $\rho_n$ the
$n$-th zero of $L(s, \chi)$ with positive imaginary part, $\rho_{-n}$
the $n$-th zero with negative imaginary part. By Lemma 8 we have
$|\gamma_n-\gamma_{-n}|<1$. Further we have
\[
|\Delta(t, \chi)+\Delta(-t, \chi)| = 
\left|\sum_\rho\frac{e^{t\gamma_n} + e^{-t\gamma_n} +
e^{t\gamma_{-n}} + e^{-t\gamma_{-n}}}{\rho}\right|, 
\]
and each single summand can be estimated as follows.
\begin{eqnarray*}
\frac{e^{it\gamma_n} + e^{-it\gamma_n}}{\rho_n} + \frac{e^{it\gamma_{-n}}
+ e^{-it\gamma_{-n}}}{\rho_{-n}}
& = & \frac{e^{it\gamma_n} +
e^{-it\gamma_n}}{|\rho|^2} - (e^{it\gamma_n} + e^{-it\gamma_n})
\left(\frac{1}{\overline{\rho_n}} - \frac{1}{\rho_{-n}}\right)\\
 && +\frac{1}{\rho_{-n}}\big((e^{it\gamma_{-n}} + e^{-it\gamma_{-n}})
 - (e^{it\gamma_n} + e^{-it\gamma_n})\big)\\
 & \leq & \frac{4}{|\rho_n|^2} + \frac{1}{\rho_n}\min(4, 2t),
\end{eqnarray*}
since
\begin{eqnarray*}
|e^{-it\gamma_{-n}})-e^{-it\gamma_n})| & = &
 |e^{-it\gamma_{-n}-it\gamma_n}-1|\\
 & < & \min(2, t\gamma_{-n} + t\gamma_n).
\end{eqnarray*}
We will use this estimate for small values of $\gamma_n$. For large
values of $\gamma_n$ we estimate the integral of a single term by
\[
\left|\int_0^x e^{it\gamma_n}\;dt\right| \leq \frac{2}{|\gamma_n|}.
\]
Putting these two estimates together and using (2), we obtain
\begin{eqnarray*}
\left|\int_0^x\Delta(t, \chi)+\Delta(-t, \chi)\;dt\right| & \leq &
\sum_n \min\left(\frac{4x}{|\rho_n|^2}+\frac{x^2}{|\rho_n|},
  \frac{4}{\gamma_n\rho_n|}\right)\\
& \leq & \sum_n \frac{4x}{|\rho_n|^2} + \sum_{\gamma_n<x^{-2}}
\frac{x^2}{|\rho_n|} + \sum_{\gamma_n\geq x^{-2}}
\frac{4}{\gamma_n|\rho_n|}\\
 & \leq & 52x\log q + \sum_{1\leq n\leq x^{-2}}
\frac{5x^2\log\big(q(n+1)\big)}{3n} + \sum_{n\geq
x^{-2}}\frac{5\log\big(q(n+1)\big)}{3n^2}\\
 & \leq & 52 x\log q + 2x^2(2\log(x^{-1}) + 1)(\log q + 2 \log(x^{-1})
 + 1)\\
 &&\quad +2x^2\log q + 2x^2\log(x^{-1})\\
 & \leq & 53 x\log q,
\end{eqnarray*}
provided that $x<0.01$ and $\log q>100$, hence our claim.
\end{proof}

The next lemma allows us to translate a statement on $\Psi(x, q,
1)-\Psi(x, q, a)$ into a statement on $\pi(x, q, 1)-\pi(x, q, a)$.
\begin{lem}
Let $q>\exp(1260)$ be an integer, $x>\exp(27q\log q)$ be a real number
such that $\Psi(x, q, 1)-\Psi(x, q,
a)>\frac{7f(q)}{\varphi(q)}\sqrt{x}$, where $f(q)$ is the number of
solutions of the congruence $x^2\equiv 1\pmod{q}$. Then we have
$\pi(x, q, 1)>\pi(x, q, a)$. On the other hand, is $a$ is a quadratic
nonresidue, and $\Psi(x, q, 1)<\Psi(x, q,
a)+\frac{\sqrt{x}}{\varphi(q)}$, we have $\pi(x, q, 1)<\pi(x, q, a)$.
\end{lem}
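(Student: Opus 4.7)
The plan is to adapt the proof of Lemma~6 to the arithmetic-progression setting. Write $\Pi(x,q,a) := \sum_{n\leq x,\ n\equiv a\pmod q}\Lambda(n)/\log n$ and use the elementary identity
\[
\pi(x,q,a) = \Pi(x,q,a) - \tfrac{1}{2}\pi_2(x,q,a) + O(x^{1/3}),
\]
where $\pi_2(x,q,a) := \#\{p\leq \sqrt{x} : p^2\equiv a\pmod q\}$ and the $O(x^{1/3})$ absorbs cubes and higher prime powers. Since the squaring map modulo $q$ has kernel of size $f(q)$, the congruence $p^2\equiv a\pmod q$ has exactly $f(q)$ solutions if $a$ is a quadratic residue and none otherwise; summing the GRH-conditional prime number theorem in arithmetic progressions over these classes yields
\[
\pi_2(x,q,a) = \frac{\varepsilon_a f(q)}{\varphi(q)}\li\sqrt{x} + O\!\big(f(q)\,x^{1/4}\log(qx)\big),
\]
where $\varepsilon_a=1$ if $a$ is a QR and $0$ otherwise; the error is negligible once $\log x > 27q\log q$.

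Next, by Abel summation,
\[
\Pi(x,q,a) = \frac{\Psi(x,q,a)}{\log x} + \int_2^x\frac{\Psi(t,q,a)}{t\log^2 t}\,dt,
\]
and an argument parallel to Lemma~6 (the main term in $\Psi$ cancels in the difference of two distinct residue classes, and the contribution of the zero-sum is controlled by the per-character bound $\sum_\rho 1/|\rho|^2 \leq 13\log q$ from~(2)) gives
\[
\Pi(x,q,1) - \Pi(x,q,a) = \frac{\Psi(x,q,1) - \Psi(x,q,a)}{\log x} + \theta\,\frac{C\sqrt{x}\log q}{\log^2 x}
\]
for some $|\theta|\leq 1$ and absolute $C$. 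Combining with the first step, when $a$ is a quadratic residue both $\frac{f(q)}{2\varphi(q)}\li\sqrt{x}$ contributions cancel, and the hypothesis $\Psi(x,q,1)-\Psi(x,q,a)>7f(q)\sqrt{x}/\varphi(q)$, divided by $\log x$, dominates the error since $\log x > 27q\log q \geq 27\varphi(q)\log q$. When $a$ is a nonresidue only the $a=1$ correction $-\frac{f(q)}{2\varphi(q)}\li\sqrt{x}$ survives, which only strengthens the first claim.

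The second assertion uses the same nonresidue correction in reverse: under $\Psi(x,q,1) - \Psi(x,q,a) < \sqrt{x}/\varphi(q)$ the same calculation gives
\[
\pi(x,q,1) - \pi(x,q,a) \leq \frac{(1-f(q))\sqrt{x}}{\varphi(q)\log x} + O\!\left(\frac{\sqrt{x}\log q}{\log^2 x}\right),
\]
and since $f(q)\geq 2$ for every $q>2$ the leading term is negative of order $\sqrt{x}/(\varphi(q)\log x)$, which dominates the error once $\log x > 27q\log q$. I expect the principal obstacle to be quantitative bookkeeping: verifying that the constants $27$ and $7$ in the hypotheses are large enough to absorb the $13\log q$ factor from~(2), the tail of the Abel-summation integral, and the remaining prime-power contributions. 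This is routine but tedious; structurally the argument is a direct generalization of the Lehmann-style proof of Lemma~6.
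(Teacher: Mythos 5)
Your overall route is the paper's route: compare $\Pi(x,q,1)-\Pi(x,q,a)$ with $\frac{\Psi(x,q,1)-\Psi(x,q,a)}{\log x}$ \`a la Lehmann, control the zero-sum error by $\sum_\rho 1/|\rho|^2\leq 13\log q$, absorb cubes and higher powers in $O(x^{1/3})$, and treat the prime-square term $\frac12\pi_2$ as the residue/nonresidue-sensitive correction. The only structural difference is that for this correction the paper simply applies the Brun--Titchmarsh inequality (an unconditional upper bound of size $\frac{6f(q)\sqrt x}{\varphi(q)\log q}$ suffices for the first claim), whereas you invoke the GRH-conditional prime number theorem in progressions to get an asymptotic for $\pi_2$; your version is slightly stronger than needed for the first claim but is essentially what one needs anyway for the second claim (where a lower bound on $\pi_2(x,q,1)$ is required), which the paper leaves as ``similar, yet somewhat easier.''

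There is, however, one genuine error of reasoning in your first claim. When $a$ is a quadratic nonresidue, the surviving correction in $\pi(x,q,1)-\pi(x,q,a)=\Pi(x,q,1)-\Pi(x,q,a)-\frac12\bigl(\pi_2(x,q,1)-\pi_2(x,q,a)\bigr)+O(x^{1/3})$ is $-\frac12\pi_2(x,q,1)\approx-\frac{f(q)}{2\varphi(q)}\li\sqrt x\approx-\frac{f(q)\sqrt x}{\varphi(q)\log x}$, which is \emph{negative}: it works against the inequality $\pi(x,q,1)>\pi(x,q,a)$, so it weakens, not strengthens, the first claim. (Indeed, this negative bias of the class $1$ is exactly the mechanism behind the second claim, and you use it correctly there, so your two cases are internally inconsistent.) The argument still goes through, but only because the hypothesis carries the factor $7$: you must check that $\frac{7f(q)\sqrt x}{\varphi(q)\log x}$ dominates the correction of size roughly $\frac{f(q)\sqrt x}{\varphi(q)\log x}$ plus the zero-sum error $O\bigl(\frac{\sqrt x\log q}{\log^2 x}\bigr)$ and the $x^{1/3}$ term, using $\log x>27q\log q$. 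That verification is easy, but as written you have dismissed it with the wrong sign rather than performed it.
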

\begin{proof}
As in the proof of Lemma 6, we have for $x>e^{12}$ the relation
\[
\Pi(x, q, 1)-\Pi(x, q, a) = \frac{\Psi(x, q, 1)-\Psi(x, q, a)}{\log x}
+ \frac{2\theta\sqrt{x}}{\varphi(q)\log x}\left(\frac{1}{\log x}
\sum_\rho\frac{1}{|\rho|^2} + x^{-1/6}\log x\right)
\]
with some $\theta$ satisfying $|\theta|<1$. Using (2) we obtain
\[
\Pi(x, q, 1)-\Pi(x, q, a) \geq \frac{\Psi(x,q,1)-\Psi(x,q,a)}{\log x} 
- \frac{27\log q}{\log^2 x}\sqrt{x}.
\]
On the other hand, using the Brun-Titchmarsh inequality to estimate
the contribution of higher powers to $\Pi(x, q, 1)$, we obtain for
$x>q^8$ the estimate
\[
\Pi(x, q, 1)-\Pi(x, q, a) \leq \pi(x, q, 1)-\pi(x, q, a) +
\frac{6f(q)\sqrt{x}}{\varphi(q)\log q} + x^{1/3}.
\]
Putting these estimates together, we get for $q>\exp(1260)$ and
$x>\exp(27q\log q)$ the first estimate of our lemma. The proof of the
second estimate is similar, yet somewhat easier.
\end{proof}

\section{Proof of Theorem 1}
The proof begins as the proof of Theorem 2. By Lemma 11, we have for
$0<t<\log 2$
\[
\left|\Delta(t, q, 1) - e^{-t/2}(\log q -
\frac{1}{2}\log(1-e^{-2t}))\right| < 2,
\]
as well as
\[
|\Delta(t, q, a)|<3
\]
for $-1<t<1$, $(a, q)=1$, and $a\not\equiv 1\pmod{q}$. Applying Lemma
12, we obtain for $0< x\leq 0.01$ the bound
\[
\left|\int_{-x}^0\Delta(t, q, 1)\;dt - \int_0^xe^{-t/2}
\left(\frac{1}{2}\log(1-e^{-2t}) - \log q\right)\;dt\right| < 53x\log
q.
\]
Setting $x=q^{-120}e^{-15f(q)}$, we deduce that
\begin{equation}
\int_{-x}^0\Delta(t, q, 1)\;dt < \int_{-x}^0\min_{a\neq 1}\Delta(t, q,
1)\;dt - 4x\log q -7xf(q).
\end{equation}
From Lemma 9 we obtain that
\[
\int_{-x}^0\Delta_T(t, q, 1)\;dt < \int_{-x}^0\min_{a\neq 1}\Delta(t, q,
1)\;dt - 3x\log q -7xf(q),
\]
provided that 
\[
\frac{2\log^3 qT}{9T} < x\log q.
\]
The latter condition is satisfied for $T=q^{130}e^{16f(q)}$, provided
that $q>e^8$ ,since $f(q)<q$ holds trivially. From Lemma 8, the number
$M$ of zeros occuring in the sum for $\Delta_T(t)$ is at most $qT\log
qT\leq q^{140}e^{17f(q)}$. From Lemma 5, applied with $\varepsilon =
\frac{1}{4\pi^2 M}$ we obtain a sequence of real numbers $s_1, \ldots,
s_N$, such that $s_1\geq 1$, $s_{i+1}\geq s_i+q^3$, 
\begin{eqnarray*}
s_N\leq \frac{q^3N(8\pi^2M)^M}{\pi^{M/2}} & < &
\exp\left(\frac{3}{2}M\log M + 3M\right)\\
 & < & \exp\left(q^{150}e^{18f(q)}\right)
\end{eqnarray*}
and
\[
\left({\sum_{\rho}}^*|\arg s_i\gamma|\right)^2 \leq
M{\sum_\rho}^*|\arg s_i\gamma|^2 \leq 1,
\]
where summation runs over all nontrivial zeros of all $L$-series
$\pmod{q}$ with imaginary part $\gamma$ satisfying $|\gamma|\leq
q^{130}e^{16 f(q)}$. As in Section 3, this bound implies
\begin{equation}
|\Delta_T(t, q, a) - \Delta_T(t+s_i, q, a)|\leq 2 < \log q
\end{equation}
for all $(q, a)=1$ and $i=1, \ldots, N$. Now assume that for all
$t\in[-x, 0]$ we had 
\begin{equation}
\Delta(t+s_i, q, 1) > \min_{a\neq 1} \Delta(t+s_i, q, a) - \log q - 7
f(q).
\end{equation}
Then we get on one hand from (4) and Lemma 9 the estimate
\[
\int_{-x}^0 |\Delta(t+s_i, q, 1)-\Delta(t, q, 1)|\;dt < 2x\sqrt{\log
  q} + 2x,
\]
whereas on the other hand we have from (3) and Lemma 9, applied to
$\Delta(t, q, a)$ the bound
\[
\int_{-x}^0 |\Delta(t+s_i, q, 1)-\Delta(t, q, 1)|\;dt > 3x\log q -
2x\sqrt{\log q} - 2x,
\]
yielding a contradiction for $q>e^2$. Hence, for each $i$, there is
some $t\in[-x, 0]$, such that (5) fails for this value of $t$, and
from Lemma 13 we deduce that this implies
\[
\pi(e^{t+s_i}, q, 1)\geq \pi(e^{t+s_i}, q, a)
\]
for all $a\not\equiv 1\pmod{q}$, provided that $s_i>27q\log q$.

Repeating the same argument, this time starting with the inequality
\[
\int_{-x}^0 \Delta(t, q, 1)\;dt < \int_{-x}^0 \min_{a\neq 1}
\Delta(t, q, 1)\;dt - 4x\log q - 7xf(q)
\]
instead of (3), we find that for each $s_i$ there is some $t\in[s_i,
s_i+x]$ such that
\[
\pi(e^{t+s_i}, q, 1) \leq \pi(e^{t+s_i}, q, a)
\]
for all $a\not\equiv 1\pmod{q}$. Hence, there are at least $N-27q\log
q$ sign changes of $\pi(x, q, 1)-\max_{a\neq 1}\pi(x, q, a)$ below
$\exp(N\exp(q^{150}e^{18f(q)}))$, solving for $N$ yields the second
statement of Theorem 1, since
\[
27q\log q\exp(q^{150}e^{18f(q)}) \leq \exp(q^{160}e^{18f(q)}).
\]

\end{document}